\documentclass[a4paper,11pt]{article}
\usepackage[T1]{fontenc}
\usepackage[utf8]{inputenc}
\usepackage[english]{babel}
 \usepackage{lmodern}
\usepackage{soul}
\usepackage{eulervm}
\usepackage{amsmath,amsthm,amssymb,amsfonts}
\usepackage{mathtools}
\usepackage[normalem]{ulem}
\usepackage{xcolor}
\usepackage{enumerate}
\usepackage{graphicx}
\usepackage{geometry}
  \geometry{left=2cm}
  \geometry{right=2cm}
  \geometry{top=2cm}
  \geometry{bottom=2cm}
\usepackage{subcaption}

\usepackage{algorithm}
\usepackage{algorithmicx}
\usepackage{algpseudocode}
\usepackage[overload]{empheq}
\usepackage{framed}
\usepackage{stmaryrd}

\definecolor{rust}{rgb}{0.6,0.1,0.1}
\definecolor{mygray}{rgb}{0.9, 0.9, 0.9}

\usepackage{hyperref}
\hypersetup{
        colorlinks=true,   
        linkcolor=blue,    
        citecolor=rust,    
        urlcolor=blue      
}

\usepackage[framemethod=TikZ]{mdframed}
 \mdfsetup{
   roundcorner=8pt,
   backgroundcolor=gray!50!white!20,
   nobreak=true}

\newtheorem{theorem}{Theorem}[section]
\newtheorem{lemma}[theorem]{Lemma}
\newtheorem{proposition}[theorem]{Proposition}

\newtheorem{definition}[theorem]{Definition}

\theoremstyle{remark}
\newtheorem{remark}[theorem]{Remark}
\newtheorem{example}[theorem]{Example}

\newcommand{\la}{\lambda}
\newcommand{\bx}{\mathbf x}
\newcommand{\bw}{\mathbf w}
\newcommand{\by}{\mathbf y}
\newcommand{\bz}{\mathbf z}

\newcommand{\bu}{\mathbf u}

\newcommand{\bbz}{\bar{\mathbf z}}

\DeclareMathOperator{\dom}{dom}
\DeclareMathOperator{\gra}{gra}
\DeclareMathOperator{\zer}{zer}
\DeclareMathOperator{\Fix}{Fix}

\newcommand{\Hilbert}{\mathcal{H}}
\newcommand{\setto}{\rightrightarrows}
\DeclareMathOperator{\Id}{Id}

\newcommand{\integ}[2]{\llbracket{#1},{#2}\rrbracket}

\title{Distributed Forward-Backward Methods for Ring Networks}

\author{Francisco J.\ Arag\'on-Artacho\thanks{Department of Mathematics,
                             University of Alicante,
                             Alicante, \textsc{Spain}.
                                 Email:~\href{mailto:francisco.aragon@ua.es}
                                 {francisco.aragon@ua.es}}
        \and
       Yura Malitsky\thanks{Department of Mathematics,
                              Link\"oping University,
                              Linköping, \textsc{Sweden}.
                              Email:~\href{href:yurii.malitskyi@liu.se}
                                          {yurii.malitskyi@liu.se}}
         \and
        Matthew K. Tam\thanks{School of Mathematics \& Statistics,
                              University of Melbourne,
                              \textsc{Australia}.
                              Email:~\href{href:matthew.tam@unimelb.edu.au}
                                          {matthew.tam@unimelb.edu.au}}
         \and
        David Torregrosa-Bel\'en\thanks{Department of Mathematics,
                             University of Alicante,
                             Alicante, \textsc{Spain}.
                                 Email:~\href{mailto:david.torregrosa@ua.es}
                                 {david.torregrosa@ua.es}}
 }

\date{}

\graphicspath{{Figures/}}

\begin{document}

\maketitle

\begin{center}
\emph{Dedicated to the memory of Asen L.\ Dontchev}
\end{center}\medskip

\begin{abstract}
In this work, we propose and analyse forward-backward-type algorithms for
finding a zero of the sum of finitely many monotone operators, which are not
based on reduction to a two operator inclusion in the product space. Each
iteration of the studied algorithms requires one resolvent evaluation per
set-valued operator, one forward evaluation per cocoercive operator, and two
forward evaluations per monotone operator. Unlike existing methods, the
structure of the proposed algorithms are suitable for distributed, decentralised
implementation in ring networks without needing global summation to enforce consensus between nodes.
\end{abstract}
\paragraph{Keywords.} monotone operator $\cdot$ monotone inclusion $\cdot$ splitting algorithm $\cdot$ forward-backward algorithm $\cdot$ distributed optimisation
\paragraph{MSC2020.} 47H05 $\cdot$ 
                     65K10 $\cdot$ 
                     90C30         


\section{Introduction}
In this work, we propose algorithms of forward-backward-type for solving structured monotone inclusions in a real Hilbert space $\Hilbert$. Specifically, we consider the problem
\begin{equation}\label{eq:mono inc}
 \text{find~}x\in\Hilbert\text{~such that~}0 \in \left(\sum_{i=1}^nA_i+\sum_{i=1}^mB_i\right)(x),
\end{equation}
where $A_1,\dots,A_n\colon\Hilbert\setto\Hilbert$ are maximally monotone
operators, and $B_1,\dots,B_m\colon\Hilbert\to\Hilbert$ are either cocoercive,
or monotone and Lipschitz continuous. Inclusions in the form \eqref{eq:mono inc}
arise in a number of settings of fundamental importance in mathematical
optimisation. In what follows, we describe three such examples.

\begin{example}[Composite minimisation] Consider the minimisation problem given by
\begin{equation}\label{eq:min}
 \min_{x\in\Hilbert}\sum_{i=1}^ng_i(x) + \sum_{i=1}^{m}f_i(x),
\end{equation}
where $g_1,\dots,g_n\colon\Hilbert\to(-\infty,+\infty]$ are proper, lsc and convex, and $f_1,\dots,f_{m}\colon\Hilbert\to(-\infty,+\infty)$ are convex and differentiable with $L$-Lipschitz continuous gradients. Through its first order optimality condition, \eqref{eq:min} can be posed as \eqref{eq:mono inc} with
$$A_i=\partial g_i\text{~~and~~}B_i=\nabla f_i$$
where $\partial g_i$ denotes the \emph{subdifferential} of $g_i$. Note that the operators $B_1,\dots,B_{m}$ are both $L$-Lipschitz and $\frac{1}{L}$-cocoercive, due to the \emph{Baillon--Haddad theorem}~\cite[Corolaire~10]{BaillonHaddad}.
\end{example}

\begin{example}[Structured saddle-point problems] Consider the saddle-point problem given by
\begin{equation}\label{eq:saddle}
\min_{x\in\Hilbert_1}\max_{y\in\Hilbert_2}\sum_{i=1}^nh_i(x)+\sum_{i=1}^{m}\Phi_i(x,y)-\sum_{i=1}^ng_i(y),
\end{equation}
where $h_1,\dots,h_n\colon\Hilbert_1\to(-\infty,+\infty]$, $g_1,\dots,g_n\colon\Hilbert_2\to(-\infty,+\infty]$ are proper, lsc and convex, and $\Phi_1,\dots,\Phi_{m}\colon\Hilbert_1\times\Hilbert_2\to(-\infty,+\infty]$ are differentiable convex-concave functions with Lipschitz continuous gradient.  Assuming a saddle-point exists, \eqref{eq:saddle} can be posed as \eqref{eq:mono inc} in the space $\Hilbert:=\Hilbert_1\times\Hilbert_2$ with
$$A_i(x,y)=\binom{\partial h_i(x)}{\partial g_i(y)}\text{~~and~~}B_i(x,y)=\binom{\phantom{-}\nabla_x\Phi_i(x,y)}{-\nabla_y\Phi_i(x,y)},$$
where we note that the operators $B_1,\dots,B_{n}\colon\Hilbert\to\Hilbert$ are monotone, due to \cite[Theorem~2]{rockafellar1970monotone}, and $L$-Lipschitz continuous, but generally not cocoercive.
\end{example}

\begin{example}[Structured variational inequalities]
Consider the variational inequality problem given by
\begin{equation}\label{eq:VI}
\text{find~}x^*\in\Hilbert\text{~such that~}\sum_{i=1}^ng_i(x)-\sum_{i=1}^ng_i(x^*)+\sum_{i=1}^{m}\langle B_i(x^*),x-x^*\rangle\geq 0\quad\forall x\in\Hilbert,
\end{equation}
where $g_1,\dots,g_n\colon\Hilbert\to(-\infty,+\infty]$ are proper, lsc and convex, and $B_1,\dots,B_{m}\colon\Hilbert\to\Hilbert$ are monotone and $L$-Lipschitz. Then \eqref{eq:VI} is of the form of \eqref{eq:mono inc} with $A_i=\partial g_i$. An important special case of \eqref{eq:VI} is the constrained variational inequality problem given by
$$ \text{find~}x^*\in\Hilbert\text{~such that~}\sum_{i=1}^{m}\langle B_i(x^*),x-x^*\rangle\geq 0\quad\forall x\in C:=\bigcap_{i=1}^nC_i, $$
where $C_1,\dots,C_n\subseteq\Hilbert$ are nonempty, closed and convex sets. This formulation allows one to exploit a representation of the set $C$ in terms of the simpler sets $C_1,\dots,C_n$.
\end{example}

\paragraph{Splitting algorithms} We focus on \emph{splitting
  algorithms} for solving \eqref{eq:mono inc} of forward-backward-type, by which
we mean those whose iteration can be expressed in terms of the \emph{resolvents}
of the set-valued operators $A_1,\dots,A_n$ and direct evaluations of the
single-valued operators $B_1,\dots,B_m$. It is always possible to reduce this problem to the $m=1$ case by combining the single-valued operators into a single operator $F:=\sum_{i=1}^mB_i$ whilst preserving the above features. However, since the resolvent of a sum is generally not related to the individual resolvents, the same cannot be said for the set-valued operators, and so it makes sense to distinguish algorithms for \eqref{eq:mono inc} based on the value of~$n$.

In the case $n=1$, there are many methods satisfying the above criteria. Among them, the best known are arguably the \emph{forward-backward method} given by
 $$ x^{k+1} = J_{\lambda A_1}\bigl(x^k-\lambda F(x^k)\bigr), $$
which can be used when $F$ is cocoercive, and the \emph{forward-backward-forward method}~\cite{tseng2000modified} given by
 $$\left\{\begin{aligned}
  y^{k} &= J_{\lambda A_1}\bigl(x^k-\lambda F(x^k)\bigr) \\
  x^{k+1} &= y^k-\lambda F(y^k)+\lambda F(x^k),
  \end{aligned}\right. $$
which can be used when $F$ is monotone and Lipschitz. When $n=2$, there are also many methods. For instance, if $F$ is cocoercive, \emph{Davis--Yin splitting}~\cite{davis2017three,dao2021adaptive,aragon2021direct} which takes the form
$$ \left\{\begin{aligned}
     x^k &= J_{\lambda A_1}(z^k) \\
     z^{k+1} &= z^k + J_{\lambda A_2}\bigl(2x^k-z^k-\lambda F(x^k)\bigr) - x^k
   \end{aligned}\right. $$
can be applied, and if $F$ is monotone and Lipschitz, then the \emph{backward-forward-reflected-backward methods}~\cite{rieger2020backward} can be used.

However, for $n>2$, the situation is drastically different. Most existing methods rely on a product space reformulation, either directly or implicitly.
For instance, the iteration given by
\begin{equation}\label{eq:gdy}
\left\{\begin{aligned}
 x^k   &= \frac{1}{n}\sum_{i=1}^nz_i^k \\
 z^{k+1}_i &= z_i^k + J_{\lambda A_i}\bigl(2x^k-z_i^k-\la B_i(x^k)\bigr)-x^k \qquad \forall i\in\integ{1}{n}
\end{aligned}\right.
\end{equation}
for cocoercive $B_1,\dots,B_n$, amounts to Davis--Yin splitting applied to the three operator inclusion
\begin{equation}\label{eq:product}
 \text{find~}\bx=(x,\dots,x)\in\Hilbert^n\text{~such that~}0\in (N_D+A+B)(\bx),
\end{equation}
where $A:=(A_1,\dots,A_n)$, $B:=(B_1,\dots,B_n)$ and $N_D$ denotes the \emph{normal cone} to the diagonal subspace $D:=\{(x_1,\dots,x_n)\in\Hilbert^n:x_1=\dots=x_n\}$. Other methods for \eqref{eq:mono inc} with $n>2$ include the \emph{generalised forward-backward method}~\cite{raguet2013generalized} and those from the \emph{projective splitting family}~\cite{johnstone2020projective,johnstone2021single}.

Indisputably, product space reformulations such as \eqref{eq:product} provide a convenient tool that makes the derivation of algorithms for $n>2$ operators an almost mechanical procedure. It is therefore natural to consider whether this tool is the only one at our disposal. In addition to academic importance in its own right, the discovery of new algorithms that do not fall within standard categories can provide new possibilities, both in terms of mathematical techniques and potential applications. Sometimes these applications can be quite unexpected, as we demonstrate next.

\paragraph{Distributed algorithms} Advances in hardware (parallel computation) and increasing the size of datasets (decentralised storage) have made distributed algorithms  one of the most prevalent trends in algorithm development. Such algorithms rely on a network of devices that perform  subtasks and  are able to communicate with each
other. For details on the topic, the reader is referred to the book of Bertsekas \& Tsitsiklis \cite{bertsekas1989parallel} as well as \cite{condat2020distributed} for recent advances.
 
 From the perspective of distributed computing, the product space formulation
 generally requires the computation of a global sum across all nodes in every iteration. To be more concrete, consider a distributed implementation of \eqref{eq:gdy} in which node $i$ performs the $z_i$-updates by using its operators, $A_i$ and $B_i$. To perform the $x$-update, the local variables $z_1,\dots,z_n$ must be aggregated and the result then broadcast to the entire network. There may be many reasons why this is not desirable including default network setting, privacy or cost issues.

 Another important aspect of distributed communication is parallelism and
 synchronisation. Return to our example involving \eqref{eq:gdy} from the
 previous paragraph, the product space reformulation  provides a fully parallel
 algorithm in the sense that all nodes performing $z$-update can compute their
 updates in parallel before sending to the central coordinator. This
 parallelisation comes at cost of requiring global synchronisation between
 nodes. Specially, the algorithm \eqref{eq:gdy} cannot move from \mbox{$k$-th} to \mbox{$(k+1)$-th} iteration until all nodes $1,\dots,n$ have completed their computation. This can be overcome with \emph{asynchronous algorithms}, that is, those which only require little or no global synchronisation. However, their development and mathematical analysis are significantly more delicate.

 \paragraph{Our contribution} We propose and analyse algorithms of
 forward-backward-type for solving \eqref{eq:mono inc} which exploit
 problem structure. Note that by using the zero operator in~\eqref{eq:mono inc} if necessary, we can always assume that $m=n-1$. Applied to this problem with cocoercive
 operators $B_1,\dots,B_{n-1}$, our algorithm can be expressed as the fixed point iteration
 $\bz^{k+1}=T(\bz^{k})$ based on the operator $T\colon\Hilbert^{n-1}\to\Hilbert^{n-1}$ given by
\begin{equation*}
T(\bz)
:=  \bz +
\gamma\begin{pmatrix}
x_2-x_1 \\
x_3-x_2 \\
\vdots \\
x_{n}-x_{n-1} \\
\end{pmatrix}
\end{equation*}
where $\bx=(x_1,\dots,x_n)\in\Hilbert^n$ depends on $\bz=(z_1,\dots,z_{n-1})\in\Hilbert^{n-1}$ and is given by
\begin{equation*}\left\{\begin{aligned}
    x_1&=J_{\lambda A_1}(z_1),  \\
    x_i&=J_{\lambda A_i}(z_i+x_{i-1}-z_{i-1}-\lambda B_{i-1}(x_{i-1}) ) \quad \forall i\in\integ{2}{n-1}, \\
    x_n&=J_{\lambda A_n}\bigl(x_1+x_{n-1}-z_{n-1}-\lambda B_{n-1}(x_{n-1})\bigr).
   \end{aligned}\right.
\end{equation*}
For the case where  $B_{i}$ are monotone and Lipschitz, the underlying operator is slightly more complicated and relies on an update similar to the one proposed in the \emph{forward-reflected-backward} method~\cite{malitsky2018forward}.

Overall, the notable characteristics of the algorithms we propose are:
\begin{itemize}
  \item They do not rely on existing product space reformulation: Instead, we
        extend the framework for backward operators, proposed in~\cite{malitsky2021resolvent}, which in turn is a generalisation of \cite{ryu2020uniqueness} for $n>3$.

  \item They are decentralised and can be naturally implemented on a ring network for communication.
  
  \item The order in which variables are updated can vary significantly between executions:  $z^{k+1}_{i}$ can be computed before evaluation of $z^{k}_{i+2},z^{k-1}_{i+3},\dots$. 
\end{itemize}
Importantly, we believe that our work is an important starting point towards a more general template that will allow for different network topologies.

\bigskip

The remainder of this work is structured as follows: In Section~\ref{s:prelim}, we recall notation and preliminaries for later use. In Section~\ref{s:dfb}, we introduce and analyse a forward-backward type algorithm for solving \eqref{eq:mono inc} with cocoercive operators. In Section~\ref{s:dforb}, we introduce and analyse a modification of the algorithm from Section~\ref{s:dfb} which can be used when $B_{1},\dots,B_m$ are not necessarily cocoercive.

\section{Preliminaries}\label{s:prelim}
Throughout this paper, $\Hilbert$ denotes a real Hilbert space
equipped with inner product $\langle\cdot , \cdot\rangle$ and induced
norm $\|\cdot\|$.
A \emph{set-valued operator} is a mapping $A:\Hilbert\setto\Hilbert$ that assigns to each point in $\Hilbert$ a subset of $\Hilbert$, i.e.,~$A(x)\subseteq \Hilbert$ for all $x\in \Hilbert$. In the case when $A$ always maps to singletons, i.e.,~$A(x)=\{u\}$ for all $x\in \Hilbert$, $A$ is said to be a \emph{single-valued mapping} and is denoted by $A:\Hilbert\to\Hilbert$. In an abuse of notation, we may write $A(x)=u$ when $A(x)=\{u\}$.  The \emph{domain}, the \emph{graph}, the set of \emph{fixed points} and the set of \emph{zeros} of $A$, are denoted, respectively, by $\dom A$, $\gra A$, $\Fix A$ and $\zer A$;~i.e.,
\begin{align*}
\dom A&:=\left\{x\in\Hilbert : A(x)\neq\varnothing\right\},&
\gra A&:=\left\{(x,u)\in\Hilbert\times\Hilbert : u\in A(x)\right\},\\
\Fix A&:=\left\{x\in\Hilbert : x\in A(x)\right\},&
\zer A&:=\left\{x\in\Hilbert : 0\in A(x)\right\}.
\end{align*}
The \emph{inverse operator} of
$A$, denoted by $A^{-1}$, is defined through
$x\in A^{-1}(u) \iff u\in A(x)$. The \emph{identity operator} is
denoted by $\Id$.
\begin{definition}\label{def:cocoercive}
An operator $B:\Hilbert \to \Hilbert$ is said to be
\begin{enumerate}[(i)]
\item\emph{$L$-Lipschitz continuous} for $L >0$ if
\begin{equation*}
\|B(x)-B(y)\| \leq L \|x-y\| \quad \forall x,y \in \Hilbert;
\end{equation*}
\item\emph{$\frac{1}{L}$-cocoercive} for $L >0$ if
\begin{equation*}
\langle B(x)-B(y), x-y \rangle \geq \frac{1}{L} \|B(x)- B(y)\|^2 \quad \forall x,y \in \Hilbert.
\end{equation*}
\end{enumerate}
\end{definition}
Note that, by the  Cauchy--Schwarz inequality, a $\frac{1}{L}$-cocoercive
operator is always $L$-Lipschitz continuous.

\begin{definition}\label{defT}
An operator $T:\Hilbert \to \Hilbert$ is said to be
\begin{enumerate}[(i)]
\item\label{defT:i}\emph{quasi-nonexpansive} if
\begin{equation*}
     \|T(x)-y\| \leq \|x-y\| \quad \forall x\in\Hilbert,\forall y\in\Fix T;
\end{equation*}
\item\label{defT:ii}\emph{nonexpansive} if it is $1$-Lipschitz continuous, i.e.,
\begin{equation*}
    \|T(x)-T(y)\| \leq \|x-y\| \quad \forall x,y \in \Hilbert;
\end{equation*}
\item\label{defT:iv}\emph{strongly quasi-nonexpansive} if there exists $\sigma>0$ such that
\begin{equation*}
\|T(x)-y\|^2 + \sigma \|(\Id-T)(x)\|^2 \leq \|x-y\|^2 \quad \forall x\in\Hilbert,\forall y\in\Fix T;
\end{equation*}
\item\label{defT:iii}\emph{averaged nonexpansive} if there exists $\alpha\in{(0,1)}$ such that
\begin{equation*}
\|T(x)-T(y)\|^2+\frac{1-\alpha}{\alpha} \|(\Id-T)(x)-(\Id-T)(y)\|^2 \leq \|x-y\|^2 \quad \forall x,y\in\Hilbert.
\end{equation*}
\end{enumerate}
In particular, the following implications hold: (\ref{defT:iii})$\Rightarrow$(\ref{defT:ii})$\Rightarrow$(\ref{defT:i})  and   (\ref{defT:iii})$\Rightarrow$(\ref{defT:iv})$\Rightarrow$(\ref{defT:i}).
\end{definition}
When we wish to explicitly specify the constants involved, we refer to the operators in Definition~\ref{defT}\eqref{defT:iv} and \eqref{defT:iii}, respectively, as  $\sigma$-strongly quasi-nonexpansive and $\alpha$-averaged nonexpansive. Since the mapping $\alpha\mapsto\frac{1-\alpha}{\alpha}$ is a bijection from $(0,1)$ to $(0,+\infty)$, there is a one-to-one relationship between the values of $\sigma$ in~\eqref{defT:iv} and $\alpha$ in \eqref{defT:iii}, with inverse relation given by $\sigma\mapsto\frac{1}{1+\sigma}$.
\begin{definition}
A set-valued operator $A:\Hilbert \setto \Hilbert$ is monotone if
\begin{equation*}
    \langle x-y,u-v\rangle \geq 0 \quad \forall (x,u),(y,v)\in\gra{A}.
\end{equation*}
Furthermore, $A$ is said to be maximally monotone if there exists no monotone operator $B:\Hilbert \setto \Hilbert$ such that $\gra{B}$ properly contains $\gra{A}$.
\end{definition}
\begin{proposition}[{\cite[Corollary~20.28]{bauschkecombettes}}]
Every continuous monotone operator with full domain is maximally monotone. In particular, every cocoercive operator is maximally monotone.
\end{proposition}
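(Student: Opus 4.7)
The plan is to prove the first statement directly from the definition of maximal monotonicity, and then derive the cocoercive case as an immediate corollary.

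For the main claim, I would let $B\colon\Hilbert\to\Hilbert$ be continuous and monotone with $\dom B=\Hilbert$, and suppose, toward contradiction (or rather toward characterising the graph), that $(y,v)\in\Hilbert\times\Hilbert$ satisfies $\langle x-y,B(x)-v\rangle\geq 0$ for every $x\in\Hilbert$; the goal is to show $v=B(y)$, which forces $(y,v)\in\gra B$ and hence rules out any proper monotone extension. The key device is the standard ``small perturbation'' trick: for an arbitrary direction $w\in\Hilbert$ and scalar $t>0$, choose $x:=y+tw$, which is admissible because $B$ has full domain. Substituting into the monotonicity inequality gives
\begin{equation*}
t\langle w,B(y+tw)-v\rangle\geq 0,
\end{equation*}
and after dividing by $t>0$ one has $\langle w,B(y+tw)-v\rangle\geq 0$.

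Next I would let $t\downarrow 0$ and invoke the continuity of $B$ to pass to the limit, yielding $\langle w,B(y)-v\rangle\geq 0$ for every $w\in\Hilbert$. Replacing $w$ by $-w$ shows that the inner product in fact vanishes identically, whence $B(y)=v$. This is exactly what it means for $B$ to be maximally monotone: any candidate extension point already belongs to $\gra B$.

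For the ``in particular'' part, suppose $B$ is $\tfrac{1}{L}$-cocoercive. Then $B$ is monotone (since $\tfrac{1}{L}\|B(x)-B(y)\|^2\geq 0$ in the cocoercivity inequality implies $\langle B(x)-B(y),x-y\rangle\geq 0$), has full domain by definition, and is $L$-Lipschitz continuous by the remark following Definition~\ref{def:cocoercive} (which is a direct Cauchy--Schwarz argument). Thus $B$ meets the hypotheses of the first part and is maximally monotone.

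I do not anticipate a serious obstacle; the only subtlety is the careful order of quantifiers in the limit step — one fixes $w$, lets $t\downarrow 0$, and only then concludes that the resulting inequality holds for all $w$. The fact that $B$ is single-valued with full domain is exactly what makes the perturbation $y+tw$ available in every direction, which is why these hypotheses cannot be dropped.
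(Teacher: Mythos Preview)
Your proof is correct and is the standard argument for this classical fact. Note, however, that the paper does not give its own proof of this proposition: it is stated with a citation to \cite[Corollary~20.28]{bauschkecombettes} and left unproved, so there is nothing to compare against beyond observing that your direct argument is exactly the one underlying the cited reference.
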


The resolvent operator, whose definition is given next, is one of the main building blocks of splitting algorithms.
\begin{definition}\label{def:resolvent}
Given an operator $A\colon\Hilbert\setto\Hilbert$, the \emph{resolvent} of $A$ with parameter $\gamma>0$ is the operator $J_{\gamma A}\colon\Hilbert\setto\Hilbert$ defined by $J_{\gamma A}:=(\Id+\gamma A)^{-1}$.
\end{definition}
\begin{proposition}[{\cite{Minty1962} or \cite[Corollary~23.11]{bauschkecombettes}}]
Let $A:\Hilbert\setto\Hilbert$ be monotone and let $\gamma>0$. Then
\begin{enumerate}[(i)]
\item $J_{\gamma A}$ is single-valued.
\item $\dom J_{\gamma A}=\Hilbert$ if and only if $A$ is maximally monotone.
\end{enumerate}
\end{proposition}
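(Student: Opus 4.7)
The plan is to handle the two parts separately. Part (i) is a direct consequence of monotonicity, while part (ii) is the classical Minty theorem, whose two implications require quite different arguments.

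For (i), I would suppose $x_1, x_2 \in J_{\gamma A}(z)$ for some $z \in \Hilbert$. By the definition of the resolvent, $z - x_i \in \gamma A(x_i)$ for $i=1,2$. Applying the monotonicity inequality to the pairs $\bigl(x_1, (z-x_1)/\gamma\bigr)$ and $\bigl(x_2, (z-x_2)/\gamma\bigr)$ in $\gra A$ gives $\langle x_1 - x_2, (z - x_1) - (z - x_2)\rangle \geq 0$, which simplifies to $-\|x_1 - x_2\|^2 \geq 0$ and forces $x_1 = x_2$.

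For the $(\Rightarrow)$ direction of (ii), I would argue by contradiction. Suppose $\dom J_{\gamma A} = \Hilbert$, equivalently $\ran(\Id + \gamma A) = \Hilbert$, but $A$ admits a proper monotone extension $\tilde A$, so there exists $(x_0, u_0) \in \gra \tilde A \setminus \gra A$. By surjectivity, there is some $x \in \Hilbert$ and $a \in A(x)$ with $x + \gamma a = x_0 + \gamma u_0$. Since $A \subseteq \tilde A$, monotonicity of $\tilde A$ applied to $(x,a)$ and $(x_0, u_0)$ yields $\langle x - x_0, a - u_0\rangle \geq 0$. But the identity $x - x_0 = -\gamma(a - u_0)$ makes this inner product equal to $-\gamma \|a - u_0\|^2$, forcing $a = u_0$ and $x = x_0$, so $(x_0, u_0) \in \gra A$, a contradiction.

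For the $(\Leftarrow)$ direction, i.e., Minty's surjectivity theorem, I would proceed in three steps. First, show that $\ran(\Id + \gamma A)$ is closed, exploiting that $(\Id + \gamma A)^{-1}$ is single-valued (by part (i)) and nonexpansive, so preimages of Cauchy sequences in the range are Cauchy and a simple limit argument using monotonicity places the limit in the range. Second, show $\ran(\Id + \gamma A)$ is convex, which can be obtained via a Debrunner--Flor-type extension argument. Third, if the range were not all of $\Hilbert$, a point outside the closed convex range would admit a strict separation, and the associated normal direction can be used to construct a strict monotone extension of $A$, contradicting maximality. The main obstacle here is the $(\Leftarrow)$ direction: closedness is routine, but convexity of the range and the geometric extension step require genuine input beyond elementary monotonicity manipulations. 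As this is a well-known classical result and the paper defers to \cite{Minty1962} and \cite[Corollary~23.11]{bauschkecombettes}, no further detail is needed.
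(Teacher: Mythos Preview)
The paper does not supply a proof of this proposition at all; it merely records the statement and cites \cite{Minty1962} and \cite[Corollary~23.11]{bauschkecombettes}. So there is nothing in the paper to compare against beyond the bare citation.

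Your arguments for part~(i) and the $(\Rightarrow)$ direction of~(ii) are clean and correct. For the $(\Leftarrow)$ direction your three-step outline is plausible in spirit but somewhat nonstandard: in particular, proving that $\ran(\Id+\gamma A)$ is \emph{convex} is not really easier than proving surjectivity outright, and the usual proofs of Minty's theorem proceed more directly, e.g.\ via the Debrunner--Flor extension lemma (given $z\notin\ran(\Id+\gamma A)$, one directly constructs a point $(x_0,u_0)$ monotonically related to $\gra A$ with $x_0+\gamma u_0=z$, contradicting maximality) or via a fixed-point/Kirszbraun-type argument. Your step~2 thus hides the real work rather than isolating it. That said, you explicitly acknowledge the classical nature of the result and defer to the same references the paper cites, so your proposal is consistent with --- and in fact more detailed than --- what the paper itself provides.
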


\section{A Distributed Forward-Backward Method}\label{s:dfb}
Let $n\geq 2$ and consider the problem
\begin{equation}\label{eq:mono incl}
  \text{find~}x\in\Hilbert\text{~such that~}0\in\left(\sum_{i=1}^nA_i+\sum_{i=1}^{n-1}B_i\right)(x),
\end{equation}
where $A_1,\dots,A_n\colon\Hilbert\setto\Hilbert$ are maximally
monotone and $B_1,\dots,B_{n-1}\colon\Hilbert\to\Hilbert$ are
$\frac{1}{L}$-cocoercive.

For the case when $B_1=\dots=B_{n-1}=0$,
Malitsky~\&~Tam~\cite{malitsky2021resolvent} proposed a
splitting algorithm with $(n-1)$-fold lifting for finding a zero of the
sum of $n\geq 2$ maximally monotone operators;  see also \cite{aragon2022primaldual} for recent extensions. In this section, we
adapt the methodology developed in~\cite{aragon2021direct} to obtain a
splitting method of forward-backward-type for the inclusion~\eqref{eq:mono incl} by modifying the splitting method in \cite{malitsky2021resolvent} without increasing the dimension of the ambient space.

\begin{mdframed}
Given $\lambda\in{(0,\frac{2}{L})}$ and
  $\gamma\in\bigl(0,1-\frac{\lambda L}{2}\bigr)$ and an initial point $\bz^0=(z_1^0,\dots,z_{n-1}^0)\in\Hilbert^{n-1}$, our proposed algorithm for~\eqref{eq:mono incl} generates two sequences, $(\bz^k)\subseteq\Hilbert^{n-1}$ and $(\bx^k)\subseteq\Hilbert^n$, according to
\begin{subequations}\label{eq:alg1}
\begin{equation}\label{eq:th T_A}
\bz^{k+1}
=  \bz^k +
\gamma\begin{pmatrix}
x_2^k-x_1^k \\
x_3^k-x_2^k \\
\vdots \\
x_{n}^k-x_{n-1}^k \\
\end{pmatrix}
\end{equation}
and
\begin{equation}\label{eq:th J_A}
\begin{cases}
    x_1^k=J_{\lambda A_1}(z_1^k),  \\
    x_i^k=J_{\lambda A_i}(z_i^k+x_{i-1}^k-z_{i-1}^k-\lambda B_{i-1}(x_{i-1}^k) ) \quad \forall i\in\integ{2}{n-1}, \\
    x_n^k=J_{\lambda A_n}\bigl(x_1^k+x_{n-1}^k-z_{n-1}^k-\lambda B_{n-1}(x_{n-1}^k)\bigr).
   \end{cases}
\end{equation}
\end{subequations}
\end{mdframed}

The structure of \eqref{eq:alg1} lends itself to a distributed
decentralised implementation, similar to the one in
\cite[Algorithm~2]{malitsky2021resolvent}. More precisely, consider
a cycle graph with $n$ nodes labeled $1$ through $n$. Each node in the
graph represents an agent, and two agents can communicate only if
their nodes are adjacent. In our setting, this means that Agent $i$ can only communicate with Agents $i-1$ and $i+1\mod{n}$, for $i\in\integ{1}{n}$. We assume
that each agent only knows its operators in~\eqref{eq:mono inc}. Specifically, we assume that only Agent $1$ knows the operator $A_1$ and that, for each $i\in\{2,\dots,n\}$, only Agent $i$ knows the operators $A_i$ and $B_{i-1}$. The responsibility of updating $x_i$ is assigned to Agent $i$ for all $i\in\{1,\dots,n\}$ and the responsibility of updating $z_i$ is assigned to Agent $i$ for $i\in\{2,\dots,n\}$.
Altogether, this gives rise to the protocol for distributed decentralised
implementation of \eqref{eq:alg1} described in
Algorithm~\ref{alg:DFB proto}.
\begin{algorithm}[!ht]
\caption{Protocol for distributed decentralised implementation of \eqref{eq:alg1}.}\label{alg:DFB proto}
\begin{algorithmic}[1]
\Require{Let $\lambda\in{(0,\frac{2}{L})}$ and
  $\gamma\in\bigl(0,1-\frac{\lambda L}{2}\bigr)$.}
\State{For each $i\in\llbracket 2,n\rrbracket $, Agent $i$ chooses $z_{i-1}^0\in\Hilbert$ and sends it to Agent $i-1$.}
\For{$k=0,1,\dots$}
  \State Agent $1$ computes $x_1^k=J_{\lambda A_1} (z_1^k)$ and sends it to Agents $2$ and $n$\;
  \For{$i = 2,\dots, n-1$}
  \State Agent $i$ computes
    \begin{equation*}\left\{\begin{aligned}
     x_i^k &=  J_{\lambda A_i} (z_i^k+x_{i-1}^k-z_{i-1}^k-\lambda B_{i-1}(x_{i-1}^k))\\
     z_{i-1}^{k+1} &= z_{i-1}^k + \gamma(x_i^k-x_{i-1}^k),
    \end{aligned}\right.\end{equation*}
   \hspace*{\algorithmicindent}\hspace*{\algorithmicindent}sends $x_i^k$ to Agent $i+1$ and sends
   $z_{i-1}^{k+1}$ to Agent $i-1$;
   \EndFor
   \State Agent $n$ computes
   \begin{equation*}\left\{\begin{aligned}
     x_n^k &=  J_{\lambda A_n} (x_1^k+x_{n-1}^k-z_{n-1}^k-\lambda B_{n-1}(x_{n-1}^k))\\
     z_{n-1}^{k+1} &= z_{n-1}^k + \gamma(x_n^k-x_{n-1}^k),
    \end{aligned}\right.\end{equation*}
    \hspace*{\algorithmicindent}sends $z_{n-1}^k$ to Agent $n-1$;
  \EndFor
\end{algorithmic}
\end{algorithm}

\begin{remark}[Termination criterion for Algorithm~\ref{alg:DFB proto}]
Let $(\mathbf{z})^k$ be the sequence generated by Algorithm~\ref{alg:DFB proto}. In order to detect termination, one could compute (possibly periodically) the residual given by 
\begin{equation*}
\|\mathbf{z}^{k+1}-\mathbf{z}^k\|^2 = \sum_{i=1}^{n-1}\|z_i^{k+1}-z_i^k\|^2.
\end{equation*}
The structure of this residual is suitable for the distributed implementation within the protocol in the algorithm. Indeed, the $ith$ term in the sum, given by $\|z_i^{k+1}-z_i^k\|^2$, can already be computed by Agent $i+1$, and therefore the full residual $\|\mathbf{z}^{k+1}-\mathbf{z}^k\|^2$ can be computed by a global summation and broadcast operation. The same stopping criterion can also be  applied to the algorithm presented in Section~\ref{s:dforb} generated by the iteration given in~\eqref{eq:th T_A lip}~and~\eqref{eq:th J_A lip}.
\end{remark}

In order to analyse convergence of \eqref{eq:alg1}, we introduce the underlying fixed point operator $T\colon\Hilbert^{n-1}\to\Hilbert^{n-1}$ given by
\begin{equation}\label{eq:T_A}
T(\bz)
:=  \bz +
\gamma\begin{pmatrix}
x_2-x_1 \\
x_3-x_2 \\
\vdots \\
x_{n}-x_{n-1} \\
\end{pmatrix}
\end{equation}
where $\bx=(x_1,\dots,x_n)\in\Hilbert^n$ depends on $\bz=(z_1,\dots,z_{n-1})\in\Hilbert^{n-1}$ and is given by
\begin{equation}\label{eq:xi}\left\{\begin{aligned}
    x_1&=J_{\lambda A_1}(z_1),  \\
    x_i&=J_{\lambda A_i}(z_i+x_{i-1}-z_{i-1}-\lambda B_{i-1}(x_{i-1}) ) \quad \forall i\in\integ{2}{n-1}, \\
    x_n&=J_{\lambda A_n}\bigl(x_1+x_{n-1}-z_{n-1}-\lambda B_{n-1}(x_{n-1})\bigr).
   \end{aligned}\right.
\end{equation}
In this way, the sequence $(\bz^k)$ given by \eqref{eq:th T_A} satisfies $\bz^{k+1}=T(\bz^k)$ for all $k\in\mathbb{N}$.

\begin{remark}
Note that, although the sum of cocoercive operators is cocoercive (see, e.g.,~\cite[Proposition~4.12]{bauschkecombettes}), considering the sum of $n-1$ operators in~\eqref{eq:mono inc} gives the freedom of either applying each operator as a forward step before the corresponding backward step, or to apply the sum of all of them before a particular backward step (by setting all the operators to be equal to zero except for one of them, which would be equal to the sum).
\end{remark}

\begin{remark}[Special cases]\label{r:special cases}
If $n=2$, then $x_1=x_{n-1}$ and $T$ in \eqref{eq:T_A} recovers the operator corresponding to \emph{Davis--Yin splitting} \cite{davis2017three,dao2021adaptive,aragon2021direct} for finding a zero of $A_1+A_2+B_1$. In turn, this includes the \emph{forward-backward algorithm} and \emph{Douglas--Rachford splitting} as special cases by further taking $A_1=0$ or $B_1=0$, respectively.

If $B_1=\dots=B_{n-1}=0$, then $T$ in \eqref{eq:T_A} reduces to the resolvent splitting algorithms proposed by the authors in \cite{malitsky2021resolvent}. This has been further  studied in \cite{bauschke2021splitting} for the particular case in which the operators $A_i$ are normal cones of closed linear subspaces.

Although the number of set-valued and single-valued monotone operators in \eqref{eq:mono incl} differ by one, it is straightforward to derive a scheme where this is not the case by setting $A_1=0$. In this case, $x_1=J_{\lambda A_1}(z_1)=z_1$ can be used to eliminate $x_1$ so that~\eqref{eq:T_A} and~\eqref{eq:xi} respectively become
\begin{equation*}
T(\bz)
:=  \bz +
\gamma\begin{pmatrix}
x_2-z_1 \\
x_3-x_2 \\
\vdots \\
x_{n}-x_{n-1} \\
\end{pmatrix}
\end{equation*}
where
\begin{equation*}
\left\{\begin{aligned}
    x_2 &=J_{\lambda A_2}(z_2-\lambda B_{1}(z_{1}) ), \\
    x_i&=J_{\lambda A_i}(z_i+x_{i-1}-z_{i-1}-\lambda B_{i-1}(x_{i-1}) ) \quad \forall i\in\integ{3}{n-1}, \\
    x_n&=J_{\lambda A_n}\bigl(z_1+x_{n-1}-z_{n-1}-\lambda B_{n-1}(x_{n-1})\bigr).
   \end{aligned}\right.
\end{equation*}
While at first it may seem unusual that the number of set-valued and single-valued monotone operators in~\eqref{eq:mono incl} are not the same, we note that this same situation arises in Davis--Yin splitting as described above.
\end{remark}

\begin{remark}
  The algorithm given by \eqref{eq:alg1} appears to be new even in the special case with $A_i = 0$ and $B_i=\nabla f_i$ for convex smooth functions $f_i$. In
  this case, one of the most popular algorithms for solving
  $\min_x\sum_if_i(x)$ in a decentralised way is EXTRA, proposed in \cite{shi2015extra}. They are similar in spirit, but also have quite
  different properties. In particular, the main update of EXTRA is
  \[\bx^{k+1} = (\Id + W)\bx^{k} - \widetilde W\bx^{k-1} - \la [\nabla f(\bx^{k})-\nabla f(\bx^{k-1})], \]
  where $W$ and $\widetilde W$ are certain mixing matrices and
  $\bx^{1} = W\bx^{0}-\la \nabla f(\bx^{0})$.
  Undoubtedly, an advantage of EXTRA is the
  ability to use a wider range of mixing matrices which, in terms of communication,
  generalises better for  network topology. 
\end{remark}

In what follows, we first describe the relationship between the solutions of the monotone inclusion~\eqref{eq:mono incl} and the fixed point set of the operator $T$ in \eqref{eq:T_A}.

\begin{lemma}\label{l:fixed points}
Let $n\geq 2$ and $\gamma,\lambda>0$. The following assertions hold.
\begin{enumerate}[(a)]
\item\label{l:fixed points b} If $\bar{x}\in \zer\left(\sum_{i=1}^nA_i+\sum_{i=1}^{n-1}B_i\right)$, then there exists $\bar{\bz}\in\Fix T$.
\item\label{l:fixed points c} If $(\bar{z}_1,\ldots\bar{z}_{n-1})\in\Fix T$, then $\bar{x}:=J_{\lambda A_{1}}(\bar{z}_1)\in \zer\left(\sum_{i=1}^nA_i+\sum_{i=1}^{n-1}B_i\right)$. Moreover,
\begin{equation}\label{eq:fp x}
 \bar{x}=J_{\lambda A_i}(\bar{z}_i-\bar{z}_{i-1}+\bar{x}-\lambda B_{i-1}(\bar{x}))=J_{\lambda A_n}(2\bar{x}-\bar{z}_{n-1}-\lambda B_{n-1}(\bar{x})),
\end{equation}
for all $i\in\integ{2}{n-1}$.
\end{enumerate}
Consequently,
$$ \Fix T\neq\varnothing \iff \zer\left(\sum_{i=1}^nA_i+\sum_{i=1}^{n-1}B_i\right)\neq\varnothing.$$
\end{lemma}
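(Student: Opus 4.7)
Both directions of the equivalence will be established by exploiting the fact that at a fixed point of $T$ the vector appearing in~\eqref{eq:T_A} must vanish, so all coordinates $x_1,\dots,x_n$ coincide, and then rewriting each resolvent equation in~\eqref{eq:xi} as a subdifferential inclusion to be summed up with a telescoping cancellation.

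For part~(\ref{l:fixed points c}), I start from $T(\bar{\bz})=\bar{\bz}$. Reading~\eqref{eq:T_A} coordinate-wise forces $\bar{x}_1=\bar{x}_2=\dots=\bar{x}_n$, and this common value equals $J_{\lambda A_1}(\bar z_1)$; denote it by $\bar x$. I then apply the identity $u=J_{\lambda A}(v)\iff v-u\in\lambda A(u)$ to each line of~\eqref{eq:xi}, yielding
\begin{equation*}
\bar z_1-\bar x\in\lambda A_1(\bar x),\quad
\bar z_i-\bar z_{i-1}-\lambda B_{i-1}(\bar x)\in\lambda A_i(\bar x)\ \ (2\le i\le n-1),\quad
\bar x-\bar z_{n-1}-\lambda B_{n-1}(\bar x)\in\lambda A_n(\bar x).
\end{equation*}
These are precisely the identities in~\eqref{eq:fp x}. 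Summing all $n$ inclusions, the $\bar z_i$-terms telescope to $0$, the two stray $\pm\bar x$-terms cancel (this is where the ring closure in the $A_n$ equation is essential), and the remaining $B$-terms add up to $-\lambda\sum_{i=1}^{n-1}B_i(\bar x)$. Hence $-\lambda\sum_{i=1}^{n-1}B_i(\bar x)\in\lambda\sum_{i=1}^{n}A_i(\bar x)$, giving $\bar x\in\zer(\sum A_i+\sum B_i)$.

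For part~(\ref{l:fixed points b}), given $\bar x$ in the zero set, there exist $a_i\in A_i(\bar x)$ with $\sum_{i=1}^{n}a_i+\sum_{i=1}^{n-1}B_i(\bar x)=0$. Reversing the inclusions above, I define $\bar z_1:=\bar x+\lambda a_1$ and, recursively,
\begin{equation*}
\bar z_i:=\bar z_{i-1}+\lambda a_i+\lambda B_{i-1}(\bar x)=\bar x+\lambda\sum_{j=1}^{i}a_j+\lambda\sum_{j=1}^{i-1}B_j(\bar x),\qquad i=2,\dots,n-1.
\end{equation*}
By construction, each of the first $n-1$ resolvent equations in~\eqref{eq:xi} is satisfied with $x_i=\bar x$, and I only need to verify the $A_n$-equation, i.e. $\bar x-\bar z_{n-1}-\lambda B_{n-1}(\bar x)\in\lambda A_n(\bar x)$. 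Substituting the closed form for $\bar z_{n-1}$ reduces this to $-\sum_{j=1}^{n-1}a_j-\sum_{j=1}^{n-1}B_j(\bar x)\in\lambda A_n(\bar x)/\lambda$, which holds because $a_n\in A_n(\bar x)$ and the zero-sum relation gives $-\sum_{j=1}^{n-1}a_j-\sum_{j=1}^{n-1}B_j(\bar x)=a_n$. Thus $\bar{\bz}\in\Fix T$.

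The final equivalence follows immediately by combining the two implications. The only delicate point is bookkeeping the asymmetric $A_n$-update, whose $x_1+x_{n-1}$ term is exactly what makes the telescoping sum close up into a genuine zero of $\sum A_i+\sum B_i$; I do not expect technical difficulty beyond this careful indexing.
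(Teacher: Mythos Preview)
Your proposal is correct and follows essentially the same approach as the paper's proof: both directions proceed by translating the resolvent equations in~\eqref{eq:xi} into inclusions via $u=J_{\lambda A}(v)\iff v-u\in\lambda A(u)$, and for part~(\ref{l:fixed points b}) both construct $\bar\bz$ by the same recursion $\bar z_1=\bar x+\lambda a_1$, $\bar z_i=\bar z_{i-1}+\lambda a_i+\lambda B_{i-1}(\bar x)$ and verify the $A_n$-equation using the zero-sum relation. The telescoping argument for part~(\ref{l:fixed points c}) is identical to the paper's summation of the three displayed inclusions.
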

\begin{proof}
\eqref{l:fixed points b}:~Let $\bar{x}\in \zer\left(\sum_{i=1}^nA_i+\sum_{i=1}^{n-1}B_i\right)$. Then there exists $\mathbf{v}=(v_1,\dots,v_n)\in\Hilbert^n$ such that $v_i\in A_i(\bar{x})$ and $\sum_{i=1}^nv_i+\sum_{i=1}^{n-1}B_i(\bar{x})=0$. Define the vector $\bar{\bz}=(\bar{z}_1,\dots,\bar{z}_{n-1})\in\Hilbert^{n-1}$ according to
$$ \left\{\begin{aligned}
\bar{z}_1 &:= \bar{x}+\lambda v_1 \in (\Id+\lambda A_1)\bar{x}, \\
\bar{z}_i &:= \lambda v_i+\bar{z}_{i-1} +\lambda B_{i-1}(\bar{x})  \in (\Id+\lambda A_i)(\bar{x}) - \bar{x}+\bar{z}_{i-1}+\lambda B_{i-1}(\bar{x}),
   \end{aligned}\right.$$
for $i\in\integ{2}{n-1}$.
Then $\bar{x}=J_{\lambda A_1}(z_1)$ and $\bar{x}=J_{\lambda A_i}(\bar{z}_i-\bar{z}_{i-1}+\bar{x}-\lambda B_{i-1}(\bar{x}))$ for $i\in\integ{2}{n-1}$. Furthermore, we have
\begin{align*}
 (\Id+\lambda A_n)(\bar{x})\ni \bar{x}+\lambda v_n
 &= \bar{x}-\lambda v_1-\lambda\sum_{i=2}^{n-1}\bigl(v_i+B_{i-1}(\bar{x})\bigr)-\lambda B_{n-1}(\bar{x}) \\
 &= \bar{x}-(z_1-\bar{x})-\sum_{i=2}^{n-1}\bigl(\bar{z}_i-\bar{z}_{i-1}\bigr)-\lambda B_{n-1}(\bar{x}) \\
 &= 2\bar{x}-\bar{z}_{n-1}-\lambda B_{n-1}(\bar{x}),
\end{align*}
which implies that $\bar{x}=J_{\lambda A_n}(2\bar{x}-\bar{z}_{n-1}-\lambda B_{n-1}(\bar{x}))$. Altogether, it follows that $\bar{\bz}\in\Fix T$.

\eqref{l:fixed points c}:~Let  $\bar{\bz}\in\Fix T$ and set $\bar{x}:=J_{\lambda A_1}(\bar{z}_1)$. Then \eqref{eq:fp x} holds thanks to the definition of $T$. The definition of the resolvent therefore implies
$$ \left\{\begin{aligned}
    \lambda A_1(\bar{x}) &\ni \bar{z}_1-\bar{x}, \\
    \lambda A_i(\bar{x}) &\ni \bar{z}_i-\bar{z}_{i-1}-\lambda B_{i-1}(\bar{x}) \quad \forall i\in\integ{2}{n-1},  \\
    \lambda A_n(\bar{x}) &\ni \bar{x}-z_{n-1}-\lambda B_{n-1}(\bar{x}).
   \end{aligned}\right. $$
Summing together the above inclusions gives $\bar{x}\in \zer\left(\sum_{i=1}^nA_i+\sum_{i=1}^{n-1}B_i\right)$, as claimed.
\end{proof}

Next, we study the nonexpansivity properties of the operator $T$ in \eqref{eq:T_A}.

\begin{lemma}\label{l:T_A ne}
For all $\bz=(z_1,\dots,z_n)\in\Hilbert^{n-1}$ and $\bar{\bz}=(\bar{z}_1,\dots,\bar{z}_n)\in\Hilbert^{n-1}$, we have
\begin{multline}\label{eq:T_A ne}
 \|T(\bz)-T(\bar{\bz})\|^2 + \left(\frac{1-\gamma}{\gamma}-\frac{\lambda L}{2\gamma}\right)\|(\Id-T)(\bz)-(\Id-T)(\bar{\bz})\|^2 \\
 + \frac{1}{\gamma}\bigl\|\sum_{i=1}^{n-1}(\Id-T)(\bz)_i-\sum_{i=1}^{n-1}(\Id-T)(\bar{\bz})_i\bigr\|^2 \leq \|\bz-\bar{\bz}\|^2.
\end{multline}
In particular, if $\lambda\in\bigl(0,\frac{2}{L}\bigr)$ and $\gamma\in\bigl(0,1-\frac{\lambda L}{2}\bigr)$, then $T$ is $\alpha$-averaged for $\alpha=\frac{2\gamma}{2-\lambda L}\in(0,1)$.
\end{lemma}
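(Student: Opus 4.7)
The plan is to reduce~\eqref{eq:T_A ne} to a single inner-product inequality and then establish it by combining the monotonicity of each $A_i$ with the cocoercivity of each $B_i$. Writing $\mathbf{d}:=(\Id-T)(\bz)-(\Id-T)(\bbz)$, $\Delta x_i:=x_i-\bar{x}_i$, $\Delta z_i:=z_i-\bar{z}_i$, and $\Delta B_i:=B_i(x_i)-B_i(\bar{x}_i)$, the definition of $T$ in~\eqref{eq:T_A} gives $\mathbf{d}_i=\gamma(\Delta x_i-\Delta x_{i+1})$ for every $i\in\integ{1}{n-1}$, so $\|\mathbf{d}\|^2=\gamma^2\sum_{i=1}^{n-1}\|\Delta x_i-\Delta x_{i+1}\|^2$ and $\bigl\|\sum_{i=1}^{n-1}\mathbf{d}_i\bigr\|^2=\gamma^2\|\Delta x_1-\Delta x_n\|^2$. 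Expanding $\|T(\bz)-T(\bbz)\|^2=\|\bz-\bbz\|^2-2\langle\bz-\bbz,\mathbf{d}\rangle+\|\mathbf{d}\|^2$ and substituting, inequality~\eqref{eq:T_A ne} becomes equivalent to the scalar estimate
\[
2\sum_{i=1}^{n-1}\langle\Delta z_i,\Delta x_i-\Delta x_{i+1}\rangle\geq\bigl(1-\tfrac{\lambda L}{2}\bigr)\sum_{i=1}^{n-1}\|\Delta x_i-\Delta x_{i+1}\|^2+\|\Delta x_1-\Delta x_n\|^2.
\]

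The workhorse is the family of resolvent identities read off from~\eqref{eq:xi}, which yield $u_i-x_i\in\lambda A_i(x_i)$ where $u_1:=z_1$, $u_i:=z_i-z_{i-1}+x_{i-1}-\lambda B_{i-1}(x_{i-1})$ for $i\in\integ{2}{n-1}$, and $u_n:=x_1-z_{n-1}+x_{n-1}-\lambda B_{n-1}(x_{n-1})$. Monotonicity of each $A_i$ applied to the pairs $(x_i,\bar{x}_i)$ delivers $n$ inequalities of the form $\langle u_i-\bar{u}_i,\Delta x_i\rangle\geq\|\Delta x_i\|^2$. Summing these and telescoping the $\Delta z$-contributions through the reindexing $\sum_{i=2}^{n-1}\langle\Delta z_{i-1},\Delta x_i\rangle=\sum_{i=1}^{n-2}\langle\Delta z_i,\Delta x_{i+1}\rangle$ produces
\[
\sum_{i=1}^{n-1}\langle\Delta z_i,\Delta x_i-\Delta x_{i+1}\rangle\geq\sum_{i=1}^n\|\Delta x_i\|^2-\sum_{i=2}^n\langle\Delta x_{i-1},\Delta x_i\rangle-\langle\Delta x_1,\Delta x_n\rangle+\lambda\sum_{i=1}^{n-1}\langle\Delta B_i,\Delta x_{i+1}\rangle.
\]
The elementary identity $2\sum_{i=1}^n\|\Delta x_i\|^2-2\sum_{i=2}^n\langle\Delta x_{i-1},\Delta x_i\rangle-2\langle\Delta x_1,\Delta x_n\rangle=\sum_{i=1}^{n-1}\|\Delta x_i-\Delta x_{i+1}\|^2+\|\Delta x_1-\Delta x_n\|^2$ then collapses the purely quadratic portion into precisely the shape of the target right-hand side, leaving only the $B$-residual to be absorbed.

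The main obstacle is controlling the residual $2\lambda\sum_{i=1}^{n-1}\langle\Delta B_i,\Delta x_{i+1}\rangle$, since cocoercivity provides a sign-definite lower bound only on $\langle\Delta B_i,\Delta x_i\rangle$, not on $\langle\Delta B_i,\Delta x_{i+1}\rangle$. I would handle it by decomposing $\Delta x_{i+1}=\Delta x_i-(\Delta x_i-\Delta x_{i+1})$ and coupling the cocoercivity estimate $\langle\Delta B_i,\Delta x_i\rangle\geq\tfrac{1}{L}\|\Delta B_i\|^2$ with the Young-type bound $|\langle\Delta B_i,\Delta x_i-\Delta x_{i+1}\rangle|\leq\tfrac{1}{L}\|\Delta B_i\|^2+\tfrac{L}{4}\|\Delta x_i-\Delta x_{i+1}\|^2$; the $\tfrac{1}{L}\|\Delta B_i\|^2$ contributions cancel exactly, leaving the clean estimate $\langle\Delta B_i,\Delta x_{i+1}\rangle\geq-\tfrac{L}{4}\|\Delta x_i-\Delta x_{i+1}\|^2$. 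Substituting back into the summed monotonicity bound converts the $1$-coefficient on $\sum\|\Delta x_i-\Delta x_{i+1}\|^2$ into $1-\tfrac{\lambda L}{2}$ and yields the reduced scalar inequality, establishing~\eqref{eq:T_A ne}. For the final assertion, the hypothesis $\gamma<1-\tfrac{\lambda L}{2}$ forces the coefficient $\frac{1-\gamma-\lambda L/2}{\gamma}$ of $\|\mathbf{d}\|^2$ in~\eqref{eq:T_A ne} to be strictly positive; discarding the nonnegative $\bigl\|\sum_{i=1}^{n-1}\mathbf{d}_i\bigr\|^2$ term and matching constants with Definition~\ref{defT}\eqref{defT:iii} via $\frac{1-\alpha}{\alpha}=\frac{1-\gamma-\lambda L/2}{\gamma}$ gives $\alpha=\frac{2\gamma}{2-\lambda L}\in(0,1)$.
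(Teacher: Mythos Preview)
Your proof is correct and uses essentially the same ingredients as the paper's: monotonicity of each $A_i$ applied to the pairs $(x_i,\bar{x}_i)$, a telescoping/reindexing of the $\Delta z$-terms, and the cocoercivity-plus-Young estimate $\langle\Delta B_i,\Delta x_{i+1}\rangle\geq -\tfrac{L}{4}\|\Delta x_i-\Delta x_{i+1}\|^2$ (this is exactly the paper's~\eqref{eq:ip coco}). The only difference is organisational: you first reduce~\eqref{eq:T_A ne} to the equivalent scalar inequality and then establish it, whereas the paper builds up the norm inequality directly from the monotonicity bounds via the identities~\eqref{eq:ip x}--\eqref{eq:ip xx}; the underlying computation is the same.
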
%
\begin{proof}
This proof mainly uses the monotonicity property of the operators $A_1,\ldots,A_n$ together with the cocoercivity property of the operators $B_1,\ldots,B_{n-1}$ to obtain some bounds which yield~\eqref{eq:T_A ne}, from where the averagedness of operator $T$ can be directly deduced. For convenience, denote $\bz^+:=T(\bz)$ and $\bar{\bz}^+:=T(\bar{\bz})$. Further, let $\bx=(x_1,\dots,x_n)\in\Hilbert^n$ be given by \eqref{eq:xi} and let $\bar{\bx}=(\bar{x}_1,\dots,\bar{x}_n)\in\Hilbert^n$ be given analogously. Since $z_1-x_1\in \lambda A_1(x_1)$ and $\bar{z}_1-\bar{x}_1\in \lambda A_1(\bar{x}_1)$, monotonicity of $\lambda A_1$ implies
\begin{equation}\label{eq:mono A1}
\begin{aligned}
 0 &\leq \langle x_1-\bar{x}_1,(z_1-x_1)-(\bar{z}_1-\bar{x}_1)\rangle \\
   &= \langle x_2-\bar{x}_1,(z_1-x_1)-(\bar{z}_1-\bar{x}_1)\rangle + \langle x_1-x_2,(z_1-x_1)-(\bar{z}_1-\bar{x}_1)\rangle.
\end{aligned}
\end{equation}
For $i\in\integ{2}{n-1}$, $z_i-z_{i-1}+x_{i-1}-x_i-\lambda B_{i-1}(x_{i-1}) \in \lambda A_i(x_i)$ and $\bar{z}_i-\bar{z}_{i-1}+\bar{x}_{i-1}-\bar{x}_i-\lambda B_{i-1}(\bar{x}_{i-1}) \in \lambda A_i(\bar{x}_i)$. Thus, monotonicity of $\lambda A_i$ yields
{\begin{align*}
 0 &\leq \langle x_i-\bar{x}_i,z_i-z_{i-1}+x_{i-1}-x_i-\lambda B_{i-1}(x_{i-1})\rangle\\
 &\quad-\langle x_i-\bar{x}_i,\bar{z}_i-\bar{z}_{i-1}+\bar{x}_{i-1}-\bar{x}_i-\lambda B_{i-1}(\bar{x}_{i-1})\rangle \\
 &= \langle x_i-\bar{x}_i,(z_i-z_{i-1}+x_{i-1}-x_i)-(\bar{z}_i-\bar{z}_{i-1}+\bar{x}_{i-1}-\bar{x}_i)\rangle \\
 &\quad-\lambda\langle x_i-\bar{x}_i,B_{i-1}(x_{i-1})-B_{i-1}(\bar{x}_{i-1})\rangle \\
 &= \langle x_{i+1}-\bar{x}_i,(z_i-x_i)-(\bar{z}_i-\bar{x}_i)\rangle + \langle x_i-x_{i+1},(z_i-x_i)-(\bar{z}_i-\bar{x}_i)\rangle \\
    &\quad- \langle x_i-\bar{x}_{i-1},(z_{i-1}-x_{i-1})-(\bar{z}_{i-1}-\bar{x}_{i-1})\rangle\\
    &\quad-\langle \bar{x}_{i-1}-\bar{x}_i,(z_{i-1}-x_{i-1})-(\bar{z}_{i-1}-\bar{x}_{i-1})\rangle \\
    &\quad-\lambda\langle x_i-\bar{x}_i,B_{i-1}(x_{i-1})-B_{i-1}(\bar{x}_{i-1})\rangle.
\end{align*}}
Summing this inequality for $i\in\integ{2}{n-1}$ and simplifying gives
\begin{equation}\label{eq:mono Ai}
\begin{aligned}
0 \leq  &\langle x_{n}-\bar{x}_{n-1},(z_{n-1}-x_{n-1})-(\bar{z}_{n-1}-\bar{x}_{n-1})\rangle \\
 &- \langle x_2-\bar{x}_{1},(z_{1}-x_{1})-(\bar{z}_{1}-\bar{x}_{1})\rangle
   +\sum_{i=2}^{n-1}\langle x_i-x_{i+1},(z_i-x_i)-(\bar{z}_i-\bar{x}_i)\rangle\\
   &-\sum_{i=1}^{n-2}\langle \bar{x}_{i}-\bar{x}_{i+1},(z_{i}-x_{i})-(\bar{z}_{i}-\bar{x}_{i})\rangle \\
   &- \lambda\sum_{i=2}^{n-1}\langle x_i-\bar{x}_i,B_{i-1}(x_{i-1})-B_{i-1}(\bar{x}_{i-1})\rangle.
\end{aligned}
\end{equation}
Since $x_1+x_{n-1}-x_n-z_{n-1}-\lambda B_{n-1}(x_{n-1})\in \lambda A_n(x_n)$ and $\bar{x}_1+\bar{x}_{n-1}-\bar{x}_n-\bar{z}_{n-1}-\lambda B_{n-1}(\bar{x}_{n-1})\in \lambda A_n(\bar{x}_n)$, monotonicity of $\lambda A_n$ gives
{\begin{equation}\label{eq:mono An}
\begin{aligned}
 0 &\leq \langle x_n-\bar{x}_n,x_1+x_{n-1}-x_n-z_{n-1}-\lambda B_{n-1}(x_{n-1})\rangle\\
 &\quad -\langle x_n-\bar{x}_n,\bar{x}_1+\bar{x}_{n-1}-\bar{x}_n-\bar{z}_{n-1}-\lambda B_{n-1}(\bar{x}_{n-1})\rangle  \\
  &= \langle x_n-\bar{x}_n,(x_1-x_n)-(\bar{x}_1-\bar{x}_n)\rangle \\
  &\quad+ \langle x_n-\bar{x}_n,(x_{n-1}-z_{n-1})-(\bar{x}_{n-1}-\bar{z}_{n-1})\rangle \\
  &\quad - \lambda\langle x_n-\bar{x}_n,B_{n-1}(x_{n-1})-B_{n-1}(\bar{x}_{n-1})\rangle \\
   &= -\langle x_n-\bar{x}_{n-1},(z_{n-1}-x_{n-1})-(\bar{z}_{n-1}-\bar{x}_{n-1})\rangle\\
   &\quad + \langle \bar{x}_n-\bar{x}_{n-1},(z_{n-1}-x_{n-1})-(\bar{z}_{n-1}-\bar{x}_{n-1})\rangle \\
   &\quad  + \frac{1}{2}\left(\|x_1-\bar{x}_1\|^2-\|x_n-\bar{x}_n\|^2-\|(x_1-x_n)-(\bar{x}_1-\bar{x}_n)\|^2\right)  \\
   &\quad - \lambda\langle x_n-\bar{x}_n,B_{n-1}(x_{n-1})-B_{n-1}(\bar{x}_{n-1})\rangle.
\end{aligned}
\end{equation}}
Adding \eqref{eq:mono A1}, \eqref{eq:mono Ai} and \eqref{eq:mono An} and rearranging gives
\begin{equation}\label{eq:before key}
\begin{aligned}
0\leq &\sum_{i=1}^{n-1}\langle (x_i-\bar{x}_i)-(x_{i+1}-\bar{x}_{i+1}),\bar{x}_i-x_i\rangle\\
&+\sum_{i=1}^{n-1}\langle (x_i-\bar{x}_i)-(x_{i+1}-\bar{x}_{i+1}),z_i-\bar{z}_i\rangle \\
&+ \frac{1}{2}\left(\|x_1-\bar{x}_1\|^2-\|x_n-\bar{x}_n\|^2-\|(x_1-x_n)-(\bar{x}_1-\bar{x}_n)\|^2\right)\\
&-\lambda\sum_{i=1}^{n-1}\langle x_{i+1}-\bar{x}_{i+1},B_{i}(x_{i})-B_{i}(\bar{x}_{i})\rangle.
\end{aligned}
\end{equation}
The first term in \eqref{eq:before key} can be expressed as
\begin{equation}\label{eq:ip x}
\begin{aligned}
&\sum_{i=1}^{n-1}\langle (x_i-\bar{x}_i)-(x_{i+1}-\bar{x}_{i+1}),\bar{x}_i-x_i\rangle\\
&\quad= \frac{1}{2}\sum_{i=1}^{n-1}\left(\|x_{i+1}-\bar{x}_{i+1}\|^2-\|x_i-\bar{x}_i\|^2-\|(x_i-x_{i+1})-(\bar{x}_i-\bar{x}_{i+1})\|^2 \right) \\
&\quad= \frac{1}{2}\left(\|x_n-\bar{x}_n\|^2-\|x_1-\bar{x}_1\|^2 - \frac{1}{\gamma^2}\|(\bz-\bz^+)-(\bar{\bz}-\bar{\bz}^+)\|^2\right),
\end{aligned}
\end{equation}
and the second term in \eqref{eq:before key} can be written as
\begin{equation}\label{eq:ip xx}
\begin{aligned}
&\sum_{i=1}^{n-1}\langle (x_i-x_{i+1})-(\bar{x}_i-\bar{x}_{i+1}),z_i-\bar{z}_i\rangle \\
&\quad= \frac{1}{\gamma}\sum_{i=1}^{n-1}\langle (z_i-z_i^+)-(\bar{z}_i-\bar{z}_i^+),z_i-\bar{z}_i\rangle \\
&\quad= \frac{1}{\gamma}\langle (\bz-\bz^+)-(\bar{\bz}-\bar{\bz}^+),\bz-\bar{\bz}\rangle \\
&\quad= \frac{1}{2\gamma}\left(\|(\bz-\bz^+)-(\bar{\bz}-\bar{\bz}^+)\|^2+\|\bz-\bar{\bz}\|^2-\|\bz^+-\bar{\bz}^+\|^2 \right).
\end{aligned}
\end{equation}
To estimate the last term, Young's inequality and $\frac{1}{L}$-cocoercivity of $B_1,\dots,B_{n-1}$ gives
\begin{equation}\label{eq:ip coco}\begin{aligned}
-\sum_{i=1}^{n-1}&\langle x_{i+1}-\bar{x}_{i+1},B_i(x_{i})-B_i(\bar{x}_{i})\rangle \\
&= \sum_{i=1}^{n-1}\langle (\bar{x}_{i+1}-\bar{x}_{i})-(x_{i+1}-x_{i}),B_i(x_{i})-B_i(\bar{x}_{i})\rangle\\
&\quad + \sum_{i=1}^{n-1}\langle \bar{x}_{i}-x_{i},B_i(x_{i})-B_i(\bar{x}_{i})\rangle \\
&\leq \frac{L}{4} \sum_{i=1}^{n-1}\|(\bar{x}_{i+1}-\bar{x}_{i})-(x_{i+1}-x_{i})\|^2  + \frac{1}{L} \sum_{i=1}^{n-1}\|B_i(x_{i})-B_i(\bar{x}_{i})\|^2 \\
&\quad- \frac{1}{L} \sum_{i=1}^{n-1}\|B_i(x_{i})-B_i(\bar{x}_{i})\|^2 \\
& = \frac{L}{4} \sum_{i=1}^{n-1}\|(\bar{x}_{i+1}-\bar{x}_{i})-(x_{i+1}-x_{i})\|^2 \\
&= \frac{L}{4\gamma^2}\|(\bz-\bz^+)-(\bar{\bz}-\bar{\bz}^+)\|^2.
\end{aligned}\end{equation}%
Thus, substituting \eqref{eq:ip x} and \eqref{eq:ip xx} into \eqref{eq:before key}, using \eqref{eq:ip coco} and simplifying gives the claimed inequality~\eqref{eq:T_A ne}.
Finally, to show that \eqref{eq:T_A ne} implies  $T$ is $\alpha$-averaged with $\alpha:=\frac{2\gamma}{2-\lambda L}$, note that $\alpha\in(0,1)$ and satisfies $\frac{1-\alpha}{\alpha} = \frac{1-\gamma}{\gamma}-\frac{\lambda L}{2\gamma}$. This completes the proof.
\end{proof}

The following theorem is our main convergence result regarding the algorithm given by \eqref{eq:alg1}.

\begin{theorem}\label{th:main}
Let $n\geq 2$, let $A_1,\dots,A_n\colon\Hilbert\setto\Hilbert$ be maximally monotone and let $B_1,\dots,B_{n-1}\colon\Hilbert\to\Hilbert$ be $\frac{1}{L}$-cocoercive with $\zer\left(\sum_{i=1}^nA_i+\sum_{i=1}^{n-1}B_i\right)\neq\varnothing$. Further, let $\lambda\in\bigl(0,\frac{2}{L}\bigr)$ and $\gamma\in\bigl(0,1-\frac{\lambda L}{2}\bigr)$. Given $\bz^0\in\Hilbert^{n-1}$, let $(\bz^k)\subseteq\Hilbert^{n-1}$ and $(\bx^k)\subseteq\Hilbert^n$ be the sequences given by \eqref{eq:alg1}.
Then the following assertions hold.
\begin{enumerate}[(a)]
\item\label{th:main_a} The sequence $(\bz^k)$ converges weakly to a point $\bz\in\Fix T$.
\item\label{th:main_b} The sequence $(\bx^k)$ converges weakly to a point $(x,\dots,x)\in\Hilbert^n$ with $x\in\zer\left(\sum_{i=1}^nA_i+\sum_{i=1}^{n-1}B_i\right)$.
\item\label{th:main_c} The sequence $\bigl(B_i(x^k_{i})\bigr)$ converges strongly to $B_i(x)$ for all $i\in\integ{1}{n-1}$.
\end{enumerate}
\end{theorem}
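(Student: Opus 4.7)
Part \eqref{th:main_a} is the immediate consequence of the previous two lemmas. Since Lemma~\ref{l:T_A ne} shows $T$ is $\alpha$-averaged nonexpansive with $\alpha=\frac{2\gamma}{2-\lambda L}\in(0,1)$ and Lemma~\ref{l:fixed points} gives $\Fix T\neq\varnothing$ under the standing assumption $\zer\bigl(\sum_{i=1}^n A_i+\sum_{i=1}^{n-1} B_i\bigr)\neq\varnothing$, the classical Krasnosel'skii--Mann theorem for averaged operators on a Hilbert space (\cite[Proposition~5.16]{bauschkecombettes}) yields $\bz^k\rightharpoonup\bar\bz$ for some $\bar\bz\in\Fix T$. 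Set $\bar x:=J_{\lambda A_1}(\bar z_1)$, which is a zero by Lemma~\ref{l:fixed points}(b). Telescoping the inequality of Lemma~\ref{l:T_A ne} also gives $\sum_k\|\bz^{k+1}-\bz^k\|^2<\infty$, so $\bz^{k+1}-\bz^k\to 0$ strongly, which by \eqref{eq:th T_A} translates into $x_{i+1}^k-x_i^k\to 0$ strongly for every $i\in\integ{1}{n-1}$ (and $x_n^k-x_1^k\to 0$ by telescoping); boundedness of $(\bx^k)$ then follows by induction using nonexpansivity of the resolvents and Lipschitz continuity of the $B_i$.

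For \eqref{th:main_c} I would revisit estimate \eqref{eq:ip coco} in the proof of Lemma~\ref{l:T_A ne}. Young's inequality was applied there with the balanced parameter $2/L$, which exactly cancels the cocoercivity contribution. Using instead a parameter strictly smaller than $2/L$ -- the strict constraint $\gamma<1-\lambda L/2$ provides precisely the slack needed -- the same derivation produces a sharpened descent
\begin{equation*}
\|\bz^{k+1}-\bar\bz\|^2+C_1\|\bz^{k+1}-\bz^k\|^2+C_2\sum_{i=1}^{n-1}\|B_i(x_i^k)-B_i(\bar x)\|^2\leq\|\bz^k-\bar\bz\|^2
\end{equation*}
with constants $C_1,C_2>0$. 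Telescoping gives $\sum_k\sum_i\|B_i(x_i^k)-B_i(\bar x)\|^2<\infty$, and hence $B_i(x_i^k)\to B_i(\bar x)$ strongly for each $i$.

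For \eqref{th:main_b}, any weakly convergent subsequence $\bx^{k_j}\rightharpoonup\bx^*$ automatically satisfies $\bx^*=(x^*,\dots,x^*)$ by the strong convergence $x_{i+1}^k-x_i^k\to 0$. Let $u_i^k\in A_i(x_i^k)$ be the multipliers arising from \eqref{eq:th J_A}, so that $(\bx^k,\bu^k)\in\gra\mathcal A$ where $\mathcal A(\bx):=A_1(x_1)\times\dots\times A_n(x_n)$ is maximally monotone on $\Hilbert^n$. Combining weak convergence of $\bz^{k_j}$, strong convergence of the differences $x_{i+1}^k-x_i^k$, and the strong convergence of $B_i(x_i^k)$ from \eqref{th:main_c}, each $u_i^{k_j}$ has a well-defined weak limit $u_i^*$, namely $u_1^*=\frac{\bar z_1-x^*}{\lambda}$, $u_i^*=\frac{\bar z_i-\bar z_{i-1}}{\lambda}-B_{i-1}(\bar x)$ for $i\in\integ{2}{n-1}$, and $u_n^*=\frac{x^*-\bar z_{n-1}}{\lambda}-B_{n-1}(\bar x)$. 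Expanding $\sum_i\langle x_i^k,\lambda u_i^k\rangle$ via \eqref{eq:th J_A} and grouping contributions by $z$-, $x$- and $B$-terms, the $z$-contributions collapse telescopically to $\sum_{i=1}^{n-1}\langle x_i^k-x_{i+1}^k,z_i^k\rangle\to 0$, the $x$-contributions reduce to $o(1)$ using the strong asymptotic regularity plus boundedness, and the $B$-contributions $-\lambda\sum_{j=1}^{n-1}\langle x_{j+1}^k,B_j(x_j^k)\rangle$ converge to $-\lambda\langle x^*,\sum_j B_j(\bar x)\rangle$ by \eqref{th:main_c}. This exactly matches $\langle\bx^*,\lambda\bu^*\rangle$, whence $\langle\bx^{k_j},\bu^{k_j}\rangle\to\langle\bx^*,\bu^*\rangle$. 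The standard weak-closure of the graph of the maximally monotone operator $\mathcal A$ under inner-product convergence then yields $(\bx^*,\bu^*)\in\gra\mathcal A$, so in particular $\bar z_1-x^*\in\lambda A_1(x^*)$. This forces $x^*=J_{\lambda A_1}(\bar z_1)=\bar x$, so every weak cluster point of $(\bx^k)$ equals $(\bar x,\dots,\bar x)$ and the full sequence converges weakly to this limit.

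The main obstacle lies in \eqref{th:main_b}: graphs of maximally monotone operators are closed only in the weak-strong topology, which is incompatible with taking a straight weak limit in each inclusion $(x_i^k,u_i^k)\in\gra A_i$. The pivotal trick is the product-space ``sum identity'' that reduces the required inner-product convergence to three pieces, one of which involves $B_i(x_i^k)$ paired weakly with $x_{j+1}^k$; without the strong convergence of $B_i(x_i^k)$ supplied by \eqref{th:main_c}, this piece is not controllable, and the argument collapses. Establishing \eqref{th:main_c} in turn depends on exploiting the strict slack in the hypothesis $\gamma<1-\lambda L/2$ through a refined choice of Young's parameter in the proof of Lemma~\ref{l:T_A ne}.
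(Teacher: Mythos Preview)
Your argument is correct but follows a route that is genuinely different from the paper's. For part~\eqref{th:main_c}, the paper does \emph{not} re-open Lemma~\ref{l:T_A ne}; instead it writes $x_i^k=J_{\lambda A_i}(y_i^k)$ and $\bar x=J_{\lambda A_i}(y_i)$, uses firm nonexpansivity of each resolvent, and telescopes to obtain the one-line estimate
\[
\text{(terms that vanish by asymptotic regularity and boundedness)}\;\geq\;\frac{\lambda}{L}\sum_{i=1}^{n-1}\|B_i(x_i^k)-B_i(\bar x)\|^2,
\]
from which~\eqref{th:main_c} is immediate. Your alternative---perturbing the Young parameter in \eqref{eq:ip coco} away from $2/L$, so that a strictly positive multiple of $\sum_i\|B_i(x_i^k)-B_i(\bar x)\|^2$ survives in the descent inequality---is also valid: the window $\frac{\lambda}{1-\gamma}<\epsilon<\frac{2}{L}$ is nonempty precisely because of the strict hypothesis $\gamma<1-\frac{\lambda L}{2}$, exactly as you noted.

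For part~\eqref{th:main_b} the contrast is sharper. The paper avoids any dependence on~\eqref{th:main_c} by building an auxiliary maximally monotone operator $S$ on $\Hilbert^n$ whose components are $(\lambda A_1)^{-1},(\lambda(A_2+B_1))^{-1},\dots,(\lambda(A_{n-1}+B_{n-2}))^{-1},\lambda(A_n+B_{n-1})$ plus a skew coupling; the inclusion \eqref{eq:th J_A} is rewritten so that the \emph{output} side converges strongly to zero while the \emph{input} side converges weakly, and ordinary weak--strong demiclosedness of $S$ finishes. You instead keep the plain product operator $\mathcal A=A_1\times\dots\times A_n$ and appeal to the Brezis-type criterion (weak--weak convergence plus convergence of the duality pairing implies graph membership). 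Your verification that $\langle\bx^{k_j},\bu^{k_j}\rangle\to\langle\bx^*,\bu^*\rangle$ is correct---the $z$-terms telescope, the $x$-terms are $o(1)$, and the $B$-terms converge because of~\eqref{th:main_c}---and the conclusion $x^*=J_{\lambda A_1}(\bar z_1)$ identifies the unique cluster point.

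What each approach buys: yours makes~\eqref{th:main_c} do double duty and keeps the graph-closure operator as simple as possible, at the price of a sequential dependence \eqref{th:main_a}$\to$\eqref{th:main_c}$\to$\eqref{th:main_b}. The paper's construction of $S$ decouples \eqref{th:main_b} from \eqref{th:main_c}; this modularity matters later in Section~\ref{s:dforb}, where the operators $B_i$ are merely monotone and Lipschitz, no analogue of~\eqref{th:main_c} is available, and yet the same $S$-based demiclosedness argument for~\eqref{th:main_b} goes through unchanged.
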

\begin{proof}
\eqref{th:main_a}:~Since $\zer\left(\sum_{i=1}^nA_i+\sum_{i=1}^{n-1}B_i\right)\neq\varnothing$, Lemma~\ref{l:fixed points}\eqref{l:fixed points b} implies $\Fix T\neq\varnothing$. Since $\lambda\in\bigl(0,\frac{2}{L}\bigr)$ and $\gamma\in\bigl(0,1-\frac{\lambda L}{2}\bigr)$,  Lemma~\ref{l:T_A ne} implies $T$ is averaged nonexpansive. By applying \cite[Theorem~5.15]{bauschkecombettes}, we deduce that $(\bz^k)$ converges weakly to a point $\bz\in\Fix T$ and that $\lim_{k\to\infty}\|\bz^{k+1}-\bz^k\|=0$.

\eqref{th:main_b}:~By nonexpansivity of resolvents, $L$-Lipschitz continuity of $B_1,\dots,B_{n-1}$, and boundedness of $(\bz^k)$, it follows that $(\bx^k)$ is also bounded. Further, \eqref{eq:T_A} and the fact that $\lim_{k\to\infty}\|\bz^{k+1}-\bz^k\|=0$ implies that
\begin{equation}\label{eq:xki}
 \lim_{k\to\infty}\|x_{i}^k-x_{i-1}^k\|=0\quad\forall i=2,\dots, n.
\end{equation}

Next, using the definition of the resolvent together with \eqref{eq:th J_A}, we have
{\begin{equation}\label{eq:demiclosed2}
S\begin{pmatrix}
z_1^k-x_1^k \\
(z_2^k-x_2^k)-(z_{1}^k-x_{1}^k) +\lambda b_2^k \\
\vdots \\
(z_{n-1}^k-x_{n-1}^k)-(z_{n-2}^k-x_{n-2}^k)+\lambda b_{n-1}^k \\
x_n^k \\
\end{pmatrix}
 \ni
\begin{pmatrix}
x_1^k-x_n^k \\
x_2^k-x_n^k \\
\vdots \\
x_{n-1}^k-x_n^k\\
x_1^k-x_n^k + \lambda\displaystyle\sum_{i=1}^{n-1}b_{i+1}^k
\end{pmatrix},\small
\end{equation}}
where $b_i^k:=B_{i-1}(x_{i}^k) - B_{i-1}(x_{i-1}^k)$ and the operator $S\colon\Hilbert^n\setto\Hilbert^n$ is given by
\begin{equation}\label{eq:operator s}
 S:= \begin{pmatrix}
(\lambda A_1)^{-1}\\\bigl(\lambda (A_2+B_1)\bigr)^{-1} \\ \vdots \\ \bigl(\lambda (A_{n-1}+B_{n-2})\bigr)^{-1} \\ \lambda(A_n+B_{n-1})\\
\end{pmatrix} + \begin{pmatrix}
0 & 0 & \dots & 0 & -\Id \\
0 & 0 & \dots & 0 & -\Id \\
\vdots & \vdots & \ddots & \vdots & \vdots \\
0 & 0 & \dots & 0 & -\Id \\
\Id & \Id & \dots & \Id & 0 \\
\end{pmatrix}.
\end{equation}
As the sum of two maximally monotone operators is again maximally monotone provided that one of the operators has full domain \cite[Corollary~24.4(i)]{bauschkecombettes}, it follows that $S$ is maximally monotone. Consequently, it is demiclosed \cite[Proposition~20.38]{bauschkecombettes}. That is, its graph is sequentially closed in the weak-strong topology.

Let $\mathbf{w}\in\Hilbert^{n}$ be an arbitrary weak cluster point of the sequence $(\bx^k)$. As a consequence of \eqref{eq:xki}, $\mathbf{w}=(x,\dots,x)$ for some $x\in\Hilbert$. Taking the limit along a subsequence of $(\bx^k)$ which converges weakly to $\mathbf{w}$ in \eqref{eq:demiclosed2}, using demiclosedness of $S$ together with
$L$-Lipschitz continuity of $B_1,\dots,B_{n-1}$, and unravelling the resulting expression gives
\begin{equation*}
\left\{\begin{array}{rll}
    \lambda A_1(x) &\ni z_1-x, \\
    \lambda(A_i+B_{i-1})(x) &\ni z_i-z_{i-1} & \forall i\in\integ{2}{n-1}, \\
    \lambda(A_n+B_{n-1})(x) &\ni x-z_{n-1},
   \end{array}\right.
\end{equation*}
which implies $\bz\in\Fix T$ and $x=J_{A_1}(z_1)\in\zer\left(\sum_{i=1}^nA_i+\sum_{i=1}^{n-1}B_i\right)$.

In other words, $\mathbf{w}=(x,\dots,x)\in\Hilbert^n$ with $x:=J_{A_1}(z_1)$ is the unique weak sequential cluster point of the bounded sequence $(\bx^k)$. We therefore deduce that $(\bx^k)$ converges weakly to $\mathbf{w}$, which completes this part of the proof.

\eqref{th:main_c}:~For convenience, denote $\by^k=(y_1^k,\dots,y_n^k)$ where
\begin{equation*}\begin{cases}
    y_1^k:=z_1^k,  \\
    y_i^k:=z_i^k+x_{i-1}^k-z_{i-1}^k-\lambda B_{i-1}(x_{i-1}^k)  \quad \forall i\in\integ{2}{n-1}, \\
    y_n^k:=x_1^k+x_{n-1}^k-z_{n-1}^k-\lambda B_{n-1}(x_{n-1}^k),
   \end{cases}
\end{equation*}
so that $x_i^k=J_{\lambda A_i}(y_i^k)$ for all $i\in\integ{1}{n}$. Define $\by=(y_1,\dots,y_n)$ in an analogous way with $\bz$ in place of $\bz^k$ and $(x,\dots,x)$ in place of $\bx^k$, so that $x=J_{\lambda A_i}(y_i)$ for all $i\in\integ{1}{n}$. Using firm nonexpansivity of resolvents yields
\begin{equation}\label{eq:ci}
\begin{aligned}
0 &\leq \sum_{i=1}^{n}\langle J_{\lambda A_i}(y_i^k)-J_{\lambda A_i}(y_i), (\Id-J_{\lambda A_i})(y_i^k)-(\Id-J_{\lambda A_i})(y_i)\rangle \\
 &=\langle x_1^k-x,(z_1^k-x_1^k)-(z_1-x)\rangle \\
 &\quad     + \sum_{i=2}^{n-1}\langle x_i^k-x, (z_i^k-x_i^k)-(z_{i-1}^k-x_{i-1}^k)-\lambda B_{i-1}(x_{i-1}^k)\rangle \\
 &\quad- \sum_{i=2}^{n-1}\langle x_i^k-x,z_i-z_{i-1}-\lambda B_{i-1}(x)\rangle \\
 &\quad     + \langle x_n^k-x,x_1^k-x_n^k-(z_{n-1}^k-x_{n-1}^k)\\
 &\quad-\lambda B_{n-1}(x_{n-1}^k)\rangle-\langle x_n^k-x,x-z_{n-1}-\lambda B_{n-1}(x)\rangle\\
 &= \langle x_1^k-x_n^k,(z_1^k-x_1^k)-(z_1-x)\rangle +\langle x_n^k-x,(z_1^k-x_1^k)-(z_1-x)\rangle\\
 &\quad     + \sum_{i=2}^{n-1}\langle x_i^k-x_n^k, (z_i^k-x_i^k)-(z_{i-1}^k-x_{i-1}^k)-(z_i-z_{i-1})\rangle \\
 &\quad     + \langle x_n^k-x, (z_{n-1}^k-x_{n-1}^k)-(z_{1}^k-x_{1}^k)-(z_{n-1}-z_{1})\rangle \\
 &\quad     -\lambda\sum_{i=1}^{n-1} \langle x_{i+1}^k-x_i^k,B_{i}(x_{i}^k)-B_{i}(x)\rangle-\lambda\sum_{i=1}^{n-1} \langle x_{i}^k-x,B_{i}(x_{i}^k)-B_{i}(x)\rangle\\
 &\quad     + \langle x_n^k-x,x_1^k-x_n^k\rangle-\langle x_n^k-x,(z_{n-1}^k-x_{n-1}^k)+(x-z_{n-1})\rangle.
\end{aligned}
\end{equation}
Rearranging \eqref{eq:ci} followed by applying $\frac{1}{L}$-cocoercivity of $B_1,\dots,B_{n-1}$ gives
\begin{multline}\label{eq:cii}
\langle x^k_n-x,x_1^k-x_n^k\rangle
+ \langle x^k_1-x_n^k,(z^k_1-x^k_1)-(z_1-x)\rangle \\ -\lambda \sum_{i=1}^{n-1}\langle x^k_{i+1}-x^k_{i},B_i(x_{i}^k)-B_i(x)\rangle \\
  + \sum_{i=2}^{n-1}\langle x^k_i-x_n^k,((z_i^k-x_i^k)-(z_{i-1}^k-x_{i-1}^k))-(z_i-z_{i-1})\rangle \\
  \geq \lambda\sum_{i=1}^{n-1}\langle x^k_{i}-x,B_i(x_{i}^k)-B_i(x)\rangle \geq  \frac{\lambda}{L}\sum_{i=1}^{n-1}\|B_i(x_{i}^k)-B_i(x)\|^2.
\end{multline}
Note that the left-hand side of \eqref{eq:cii} converges to zero due to \eqref{eq:xki} and the boundedness of sequences $(\bz^k),(\bx^k)$ and $(B_i(x_{i}^k))$ for $i\in\integ{1}{n-1}$. It then follows that $B_i(x^k_{i})\to B_i(x)$ for all $i\in\integ{1}{n-1}$, as claimed.
\end{proof}

\begin{remark}[Attouch--Th\'era duality]
Let $I\subseteq\{1,\dots,n-1\}$ be a non-empty index set with  cardinality denoted by $\lvert I\rvert$. Express the monotone inclusion \eqref{eq:mono inc} as
\begin{equation}\label{eq:primal inclusion}
 \text{find~}x\in\Hilbert\text{~such that~}0\in\sum_{i\in I}B_i(x)+\left(\sum_{i=1}^nA_i+\sum_{i\not\in I}B_i\right)(x),
\end{equation}
and note that the first operator $\sum_{i\in I}B_i$ is $\frac{1}{\lvert I\rvert L}$-cocoercive (see, e.g.,~\cite[Proposition~4.12]{bauschkecombettes}). The \emph{Attouch--Th{\'e}ra dual}~\cite{attouch1996general} associated with \eqref{eq:primal inclusion} takes the form
\begin{equation}\label{eq:dual inclusion}
 \text{find~}u\in\Hilbert\text{~such that~}0\in \left(\sum_{i\in I}B_i\right)^{-1}(u)-\left(\sum_{i=1}^nA_i+\sum_{i\not\in I}B_i\right)^{-1}(-u),
\end{equation}
where we note that the first operator $\left(\sum_{i\in I}B_i\right)^{-1}$ is $\frac{1}{\lvert I\rvert L}$-strongly monotone. Hence, as a strongly monotone inclusion, \eqref{eq:dual inclusion} has a unique solution $\bar{u}\in\Hilbert$. Moreover, for any solution $\bar{x}\in\Hilbert$ of \eqref{eq:primal inclusion},  \cite[Theorem~3.1]{attouch1996general} implies $\bar{u}=\left(\sum_{i\in I}B_i\right)(\bar{x})$. In the context of the previous result, Theorem~\ref{th:main}\eqref{th:main_c} implies $\sum_{i\in I}B_i(x^k_i)\to\bar{u}$ as $k\to\infty$. In other words, the algorithm in~\eqref{eq:alg1} also produces a sequence which converges strongly to the unique solution of the dual inclusion \eqref{eq:dual inclusion}.
\end{remark}

\begin{remark}
(i) When $B_1 =\dots=B_{n-1}=0$, Theorem~\ref{th:main} recovers \cite[Theorem~4.5]{malitsky2021resolvent}.\\
(ii) In the special case when $n=2$, \eqref{eq:T_A ne} from Lemma~\ref{l:T_A ne} simplifies to give the stronger inequality
\begin{equation}\label{eq:ne 2}
\|T(\mathbf{z})-T(\mathbf{\bar{z}})\|^2 +\left(\frac{2-\gamma}{\gamma}- \frac{\lambda L}{2\gamma}\right)\|(\Id-T)(\mathbf{z})-(\Id-T)(\bar{\mathbf{z}})\|^2\leq \|\mathbf{z} - \mathbf{\bar{z}}\|^2.
\end{equation}
This assures averagedness of $T$ provided that $\gamma\in\bigl(0,2-\frac{\lambda L}{2}\bigr)$, which is larger than the range of permissible values for $\gamma$ in the statement of Theorem~\ref{th:main}. However, by using \eqref{eq:ne 2}, a proof similar to that of Theorem~\ref{th:main} guarantees the convergence for a larger range of parameter values, namely, when $\lambda \in{\bigl( 0,\frac{4}{L}\bigr)}$ and $\gamma\in{\bigl(0,2-\frac{\lambda L}{2}\bigr)}$. For details, see \cite{dao2021adaptive,aragon2021direct}.
\end{remark}

\section{A Distributed Forward-Reflected-Backward Method}\label{s:dforb}
Let $n\geq 3$ and consider the problem
\begin{equation}\label{eq:mono inc lip}
 \text{find~}x\in\Hilbert\text{~such that~}0\in\left(\sum_{i=1}^nA_i+\sum_{i=1}^{n-2}B_i\right)(x),
\end{equation}
where $A_1,\dots,A_n\colon\Hilbert\setto\Hilbert$ are maximally monotone and $B_1,\dots,B_{n-2}\colon\Hilbert\to\Hilbert$ are monotone and $L$-Lipschitz continuous.

Developing splitting algorithms which use forward evaluations of Lipschitz
continuous monotone operators is generally more intricate than those exploiting
cocoercivity, such as the one in the previous section. For concreteness, consider the special case of \eqref{eq:mono inc lip} with two operators given by
\begin{equation}\label{eq:two op}
 \text{find~}x\in\Hilbert\text{~such that~}0\in\left(A_1+B_1\right)(x).
\end{equation}
It is well known that the \emph{forward-backward method} for \eqref{eq:two op} given by
\begin{equation}\label{eq:fb}
x^{k+1} = J_{\la A_1}(x^k-\la B_1(x^k))
\end{equation}
fails to converge for any $\la >0$. Indeed, consider the particular instance of~\eqref{eq:two op} given by  $\Hilbert = \mathbb{R}^2$, $A_1:=0$ and $B_1:=\left(\begin{smallmatrix} 0 & -1 \\ 1 & 0 \end{smallmatrix}\right)$, whose unique solution is $(0,0)^T$. Then, $B_1$ is skew-symmetric and thus monotone (but not cocoercive), but the sequence generated by~\eqref{eq:fb} will diverge for any non-zero starting point, since the eigenvalues of $\Id-\lambda B_1$ are $1\pm \lambda i$. However, a small modification of \eqref{eq:fb} gives rise to
\begin{equation}
  \label{eq:forb}
  x^{k+1} = J_{\la A_1}\bigl(x^k -2\la B_1(x^k) + \la B_1(x^{k-1})\bigr),
\end{equation}
which is known as the \emph{forward-reflected-backward
  method}~\cite{malitsky2018forward}. Unlike \eqref{eq:fb}, it  converges for any $\la < \frac{1}{2L}$. While \eqref{eq:forb} is not the only constant stepsize scheme for solving \eqref{eq:two op}, as there are a few which are fundamentally different~\cite{tseng2000modified,csetnek2019shadow}, it is arguably one of the simplest. In this section, we develop a modification of the method from the previous section which converges for Lipschitz continuous operators by drawing inspiration from the differences between \eqref{eq:forb} and \eqref{eq:fb}.
\smallskip

\begin{mdframed}
Given $\lambda\in\bigl(0,\frac{1}{2L}\bigr)$ and $\gamma\in\bigl(0,1-2\lambda L\bigr)$ and an initial point $\bz^0=(z_1^0,\dots,z_{n-1}^0)\in\Hilbert^{n-1}$, our proposed algorithm for~\eqref{eq:mono inc lip} generates two sequences, $(\bz^k)\subseteq\Hilbert^{n-1}$ and $(\bx^k)\subseteq\Hilbert^n$, according to
\begin{subequations}\label{eq:alg2}
\begin{equation}\small\label{eq:th T_A lip}
\bz^{k+1}
=  \bz^k +
\gamma\begin{pmatrix}
x_2^k-x_1^k \\
x_3^k-x_2^k \\
\vdots \\
x_{n}^k-x_{n-1}^k \\
\end{pmatrix}
\end{equation}
and
\begin{equation}\label{eq:th J_A lip}
\footnotesize\left\{\begin{aligned}
    x_1^k&=J_{\lambda A_1}\bigl( z_1^k\bigr),  \\
    x_2^k&=J_{\lambda A_2}\bigl( z_2^k+x_{1}^k-z_{1}^k-\lambda B_{1}(x_{1}^k) \bigr) , \\
    x_i^k&=J_{\lambda A_i}\bigl( z_i^k+x_{i-1}^k-z_{i-1}^k-\lambda B_{i-1}(x_{i-1}^k)-\lambda (B_{i-2}(x_{i-1}^k) - B_{i-2}(x_{i-2}^k)) \bigr), \\
    x_n^k&=J_{\lambda A_n}\bigl( x_1^k+x_{n-1}^k-z_{n-1}^k-\lambda (B_{n-2}(x_{n-1}^k) - B_{n-2}(x_{n-2}^k)) \bigr).
\end{aligned}\right.\small
 \end{equation}
 for  $i\in\integ{3}{n-1}.$
\end{subequations}
\end{mdframed}

Compared to the algorithm proposed in the previous section, the only major change here is that some expressions for $x^k_i$ in \eqref{eq:th J_A lip} incorporate a ``reflection-type'' term involving the operator $B_{i-2}$. This precise form seems important for our subsequence convergence analysis and it seems not easy to incorporate ``reflection-type'' terms involving the operator $B_{i-1}$. The structure of \eqref{eq:alg2} allows for a similar protocol to the one described in Algorithm~\ref{alg:DFB proto} to be used for a distributed decentralised implementation. The only change to the protocol (in terms of communication) is that Agent $i$ must also now send $\lambda\bigl(B_{i-1}(x_{i}^k)-B_{i-1}(x_{i-1}^k)\bigr)$ to Agent $i+1$ for all $i\in\llbracket 2,n-1\rrbracket$.

\begin{remark}
To the best of our knowledge, the scheme given by \eqref{eq:alg2} does not directly recover any existing forward-backward-type scheme as special case (although it is clearly related to \eqref{eq:forb}). For example, if we take $n=3$ and $A_1=A_3=0$. Then $x_1^k$ and $x_3^k$ can be eliminated from~\eqref{eq:alg2} to give
\begin{equation*}
\left\{\begin{aligned}
  x_2^k &= J_{\lambda A_2}\big( z_2^k-\lambda B_1(z_1^k) \bigr) \\
  z^{k+1}_1 &= z^k_1 + \gamma\bigl(x_2^k-z_1^k\bigr) \\
  z^{k+1}_2 &= z^k_2 + \gamma\bigl(z_1^k-z_2^k-\lambda(B_1(x_2^k)-B_1(z_1^k))\bigr).
 \end{aligned}\right.
\end{equation*}
To better understand the relationship between this and \eqref{eq:forb}, it is instructive to consider the limiting case with $\gamma=1$. Indeed, when $\gamma=1$, $x_2^{k}$ and $z_2^k$ can be eliminated to give
$$ z^{k+1}_1 = J_{\lambda A_2}\big( z_1^{k-1}-2\lambda B_1(z_1^{k})+\lambda B_1(z_1^{k-1})\bigr). $$
Although this closely resembles \eqref{eq:forb} for finding zero of $A_2+B_1$, it is not exactly the same due to the index of the first term inside the resolvent.
\end{remark}

In order to analyse \eqref{eq:alg2}, we introduce the underlying fixed point operator $\widetilde{T}\colon\Hilbert^{n-1}\to\Hilbert^{n-1}$ given by
\begin{equation}\label{eq:T lip}
\widetilde{T}(\bz)
:=  \bz +
\gamma\begin{pmatrix}
x_2-x_1 \\
x_3-x_2 \\
\vdots \\
x_{n}-x_{n-1} \\
\end{pmatrix}
\end{equation}
where $\bx=(x_1,\dots,x_n)\in\Hilbert^n$ depends on $\bz=(z_1,\dots,z_n)\in\Hilbert$ and is given by
\begin{equation}{\footnotesize\label{eq:xi lip}\left\{\begin{aligned}
    x_1&=J_{\lambda A_1}\bigl( z_1\bigr),  \\
    x_2&=J_{\lambda A_2}\bigl( z_2+x_{1}-z_{1}-\lambda B_{1}(x_{1}) \bigr) , \\
    x_i&=J_{\lambda A_i}\bigl( z_i+x_{i-1}-z_{i-1}-\lambda B_{i-1}(x_{i-1})-\lambda (B_{i-2}(x_{i-1}) - B_{i-2}(x_{i-2})) \bigr), \\
    x_n&=J_{\lambda A_n}\bigl( x_1+x_{n-1}-z_{n-1}-\lambda (B_{n-2}(x_{n-1}) - B_{n-2}(x_{n-2})) \bigr),
   \end{aligned}\right.}
 \end{equation}%
for $i\in\integ{3}{n-1}$.
In this way, the sequence $(\bz^k)$ given by \eqref{eq:alg2}  satisfies $\bz^{k+1}=\widetilde{T}(\bz^k)$ for all $k\in\mathbb{N}$.

Next, we analyse the nonexpansivity properties of the operator~$\widetilde{T}$. The proof of the following result is similar to that of Lemma~\ref{l:T_A ne}, but using the Lipschitzian properties of the operators $B_1,\ldots,B_{n-2}$ instead of cocoercivity.
\begin{lemma}\label{l:T_A ne_lip}
Let $\bar{\bz}=(\bar{z}_1,\dots,\bar{z}_{n-1})\in\Fix\widetilde{T}$. Then, for all $\bz=(z_1,\dots,z_{n-1})\in\Hilbert^{n-1}$, we have
\begin{multline}\label{eq:T_A ne lip}
 \|\widetilde{T}(\bz)-\bar{\bz}\|^2 + \left(\frac{1-\gamma}{\gamma}-\frac{2\lambda L}{\gamma}\right)\|(\Id-\widetilde{T})(\bz)\|^2
 + \frac{1}{\gamma}\bigl\|\sum_{i=1}^{n-1}(\Id-\widetilde{T})(\bz)_i\bigr\|^2 \\
 + \gamma \lambda L \|(\Id-\widetilde{T})(\bz)_1\|^2 + \gamma \lambda L\|(\Id-\widetilde{T})(\bz)_{n-1}\|^2\leq
\|\bz-\bar{\bz} \|^2.
\end{multline}
In particular, if $\lambda\in(0,\frac{1}{2L})$ and $\gamma\in(0,1-2\lambda L)$, then $\widetilde{T}$ is $\sigma$-strongly quasi-nonexpansive for $\sigma=\frac{1-\gamma}{\gamma}-\frac{2\lambda L}{\gamma}>0$.
\end{lemma}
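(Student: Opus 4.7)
The strategy is to follow the proof of Lemma~\ref{l:T_A ne}, replacing the $\frac{1}{L}$-cocoercivity bound on the $B_i$-residuals by the combination of monotonicity, $L$-Lipschitz continuity and the algebraic cancellation built into the reflection-type correction $-\lambda(B_{i-2}(x_{i-1})-B_{i-2}(x_{i-2}))$ in \eqref{eq:xi lip}. The weaker hypothesis forces us to restrict to $\bar{\bz}\in\Fix\widetilde{T}$, and by an argument analogous to Lemma~\ref{l:fixed points}\eqref{l:fixed points c} adapted to $\widetilde{T}$, the associated $\bar{\bx}$ collapses to $\bar{x}_1=\dots=\bar{x}_n=:\bar{x}$; in particular all reflection increments at $\bar{\bx}$ vanish, so the inclusions $\cdot\in\lambda A_i(\bar{x})$ take exactly the same form as in the cocoercive setting, which is the observation that keeps the bookkeeping manageable.

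Setting $\bz^+:=\widetilde{T}(\bz)$ and reading off each inclusion $\lambda A_i(x_i)\ni\cdots$ from \eqref{eq:xi lip}, I pair it with the corresponding inclusion at $\bar{x}$ and apply monotonicity of $\lambda A_i$. Summing the $n$ resulting inequalities and reorganising exactly as in the derivation of \eqref{eq:before key}--\eqref{eq:ip xx}, one obtains on the left $\|\bz^+-\bar{\bz}\|^2$, the term $\frac{1}{\gamma}\bigl\|\sum_{i=1}^{n-1}(\bz-\bz^+)_i\bigr\|^2$, and a multiple of $\|\bz-\bz^+\|^2$, up to the $B$-dependent residuals.

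The main obstacle is controlling these residuals without cocoercivity. They are of two kinds: forward residuals $-\lambda\langle x_{i+1}-\bar{x},B_i(x_i)-B_i(\bar{x})\rangle$ analogous to those treated in \eqref{eq:ip coco}, and reflection residuals with the opposite sign involving $B_{i-2}$ across consecutive $x$-components. The plan is to split each forward residual as $-\lambda\langle x_i-\bar{x},B_i(x_i)-B_i(\bar{x})\rangle-\lambda\langle x_{i+1}-x_i,B_i(x_i)-B_i(\bar{x})\rangle$, drop the first summand by monotonicity of $B_i$, and combine the second with the matching reflection term arising at the resolvent step two indices later. Young's inequality together with $\|B_i(x_i)-B_i(\bar{x})\|\le L\|x_i-\bar{x}\|$ and the identity $\gamma(x_{i+1}-x_i)=(\bz-\bz^+)_i$ then bound the paired terms by a multiple of $\|\bz-\bz^+\|^2$, which is absorbed into the coefficient of $\|(\Id-\widetilde{T})(\bz)\|^2$ to give the announced $\frac{1-\gamma-2\lambda L}{\gamma}$. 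The forward/reflection pairing has no partner at the endpoint indices $i=1$ and $i=n-1$, and the careful bookkeeping there produces the extra left-hand-side terms $\gamma\lambda L\|(\Id-\widetilde{T})(\bz)_1\|^2$ and $\gamma\lambda L\|(\Id-\widetilde{T})(\bz)_{n-1}\|^2$, which reflect the fact that the first resolvent uses no $B$-evaluation and the last resolvent uses only the reflection piece.

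The ``in particular'' claim is then immediate: under $\lambda\in(0,\frac{1}{2L})$ and $\gamma\in(0,1-2\lambda L)$ the coefficient $\sigma:=\frac{1-\gamma-2\lambda L}{\gamma}$ is strictly positive, and dropping the three nonnegative terms other than the first in \eqref{eq:T_A ne lip} yields $\|\widetilde{T}(\bz)-\bar{\bz}\|^2+\sigma\|(\Id-\widetilde{T})(\bz)\|^2\le\|\bz-\bar{\bz}\|^2$ for every $\bz\in\Hilbert^{n-1}$ and every $\bar{\bz}\in\Fix\widetilde{T}$, i.e.~$\sigma$-strong quasi-nonexpansivity in the sense of Definition~\ref{defT}\eqref{defT:iv}.
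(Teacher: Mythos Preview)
Your overall strategy matches the paper's: collapse $\bar{\bx}$ to a constant $\bar{x}$ using $\bar{\bz}\in\Fix\widetilde{T}$, sum the monotonicity inequalities for the $A_i$ as in \eqref{eq:before key}--\eqref{eq:ip xx}, isolate the $B$-dependent residuals, and arrange a cancellation between the forward and reflection pieces using monotonicity and Lipschitz continuity of the $B_i$. However, the specific mechanism you describe for that cancellation does not work as written.

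The problem is your treatment of the forward residual $-\lambda\langle x_{i+1}-\bar{x},B_i(x_i)-B_i(\bar{x})\rangle$. You split $x_{i+1}-\bar{x}=(x_i-\bar{x})+(x_{i+1}-x_i)$ and discard $-\lambda\langle x_i-\bar{x},B_i(x_i)-B_i(\bar{x})\rangle$ by monotonicity of $B_i$ at $x_i$. The leftover $-\lambda\langle x_{i+1}-x_i,B_i(x_i)-B_i(\bar{x})\rangle$ still contains $B_i(\bar{x})$; applying Young together with $\|B_i(x_i)-B_i(\bar{x})\|\le L\|x_i-\bar{x}\|$ produces a term proportional to $\|x_i-\bar{x}\|^2$, which is \emph{not} controlled by $\|\bz-\bz^+\|^2$. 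Nothing in the reflection residual $-\lambda\langle x_{i+2}-\bar{x},B_i(x_{i+1})-B_i(x_i)\rangle$ cancels this $B_i(\bar{x})$ either, so the bound you announce cannot be reached along this route.

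The fix (and what the paper does) is to apply monotonicity of $B_i$ at the anchor point $x_{i+1}$ rather than $x_i$: adding $0\le\langle x_{i+1}-\bar{x},B_i(x_{i+1})-B_i(\bar{x})\rangle$ to the forward residual gives the bound $\langle x_{i+1}-\bar{x},B_i(x_{i+1})-B_i(x_i)\rangle$, in which $B_i(\bar{x})$ has disappeared. One then splits the \emph{reflection} residual as
\[
-\langle x_{i+2}-\bar{x},B_i(x_{i+1})-B_i(x_i)\rangle
=-\langle x_{i+1}-\bar{x},B_i(x_{i+1})-B_i(x_i)\rangle+\langle x_{i+1}-x_{i+2},B_i(x_{i+1})-B_i(x_i)\rangle.
\]
The first piece exactly cancels the forward bound, and the second involves only consecutive differences, so Lipschitz continuity gives $\le\frac{L}{2}(\|x_{i+2}-x_{i+1}\|^2+\|x_{i+1}-x_i\|^2)$, which \emph{is} a multiple of $\|\bz-\bz^+\|^2$. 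Summing over $i$ and tracking the missing endpoint contributions at $i=1$ and $i=n-1$ then yields \eqref{eq:T_A ne lip} with the announced extra $\gamma\lambda L$ terms. Your ``in particular'' paragraph is correct once the inequality is established.
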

\begin{proof}
For convenience, denote $\bz^+=\widetilde{T}(\bz)$. Further, let $\bx=(x_1,\dots,x_n)\in\Hilbert^n$ be given by \eqref{eq:xi lip} and let $\bar{\bx}=(\bar{x},\dots,\bar{x})\in\Hilbert^{n-1}$ be given analogously. Note that the expression of $\bar{\bx}$ is justified as $\bar{\bz}=\widetilde{T}(\bar{\bz})$. Monotonicity of $\lambda A_1$ implies
\begin{equation}\label{eq:qneA1}
0  \leq \langle x_2-\bar{x}, (z_1-x_1)-(\bar{z}_1-\bar{x})\rangle  + \langle x_1-x_2, (z_1-x_1)-(\bar{z}_1-\bar{x})\rangle .
\end{equation}
In order to simplify the case study, we introduce the zero operator $B_0:=0$. By monotonicity of $\lambda A_i$, we deduce
\begin{equation}\label{eq:qneAi}
\begin{aligned}
 0 &\leq      \langle x_{i+1}-\bar{x},(z_i-x_i) - (\bar{z}_i - \bar{x})\rangle + \langle x_i-x_{i+1},  (z_{i}-x_{i})-(\bar{z}_{i}-\bar{x})\rangle            \\
 & \quad - \langle x_{i}-\bar{x},  (z_{i-1}-x_{i-1})-(\bar{z}_{i-1}-\bar{x})\rangle   \\
 &\quad-\lambda\langle x_i-\bar{x},B_{i-1}(x_{i-1})-B_{i-1}(\bar{x})\rangle \\
 & \quad -   \lambda \langle x_i-\bar{x},B_{i-2}(x_{i-1})-B_{i-2}(x_{i-1})\rangle,
\end{aligned}
\end{equation}
and monotonicity of $\lambda A_n$ yields
\begin{equation}\label{eq:qneAn}
\begin{aligned}
0 &\leq   -\langle x_n-\bar{x}, (z_{n-1}-x_{n-1}) - (\bar{z}_{n-1}-\bar{x})\rangle\\
&\quad  - \lambda \langle x_n-\bar{x}, B_{n-2}(x_{n-1})-B_{n-2}(x_{n-2})\rangle \\
&\quad  + \frac{1}{2}\left( \|x_1-\bar{x}\|^2 - \|x_n-\bar{x}\|^2 + \|x_1-x_n\|^2\right).
\end{aligned}
\end{equation}
Summing together \eqref{eq:qneA1}-\eqref{eq:qneAn}, we obtain the inequality
\begin{equation}\label{eq:qnesum}
\begin{aligned}
0 & \leq \sum_{i=1}^{n-1} \langle (\bar{z}_i-\bar{x})-(z_i-x_i),x_{i+1}-x_i\rangle \\
&\quad+ \frac{1}{2}\left( \|x_1-\bar{x}\|^2 - \|x_n-\bar{x}\|^2 + \|x_1-x_n\|^2\right) \\
& \quad - \lambda \sum_{i=2}^{n-1} \langle x_i-\bar{x},B_{i-1}(x_{i-1})-B_{i-1}(\bar{x}) \rangle \\
& \quad  - \lambda\sum_{i=3}^n\langle x_i-\bar{x},B_{i-2}(x_{i-1})-B_{i-2}(x_{i-2})\rangle,
\end{aligned}
\end{equation}
where we have omitted the index $i=2$ in the last sum, since $B_0:=0$. The first term in~\eqref{eq:qnesum} multiplied by $2\gamma$ can be written as
\begin{equation}\label{eq:qnesum1}
\begin{aligned}
 2\gamma \sum_{i=1}^{n-1} &\langle (\bar{z}_i-\bar{x})-(z_i-x_i),x_{i+1}-x_i\rangle  \\
& = \sum_{i=1}^{n-1} \left(\|\bar{z}_i-z_i\|^2 + \|z_i^+-z_i\|^2 - \| z_i^+-\bar{z}_i\|^2 \right) \\
& \quad-\frac{1}{\gamma} \sum_{i=1}^{n-1}  \|z_i^+-z_i\|^2 + \gamma \left( \|x_{n}-\bar{x}\|^2 - \|x_1-\bar{x}\|^2\right).
\end{aligned}
\end{equation}
Therefore, multiplying~\eqref{eq:qnesum} by $2\gamma$ and substituting~\eqref{eq:qnesum1}, we reach the inequality                                 
\begin{multline}\label{eq:key lip}
 \|\widetilde{T}(\bz)-\bar{\bz}\|^2 + \frac{1-\gamma}{\gamma}\|(\Id-\widetilde{T})(\bz)\|^2
 + \frac{1}{\gamma}\bigl\|\sum_{i=1}^{n-1}(\Id-\widetilde{T})(\bz)_i\bigr\|^2 \\
 \leq
\|\bz-\bbz \|^2-2\gamma\lambda\sum_{i=2}^{n-1}\langle x_i-\bar{x},B_{i-1}(x_{i-1})-B_{i-1}(\bar{x})\rangle\\
-2\gamma\lambda\sum_{i=3}^{n}\langle x_i-\bar{x}, B_{i-2}(x_{i-1}) - B_{i-2}(x_{i-2})\rangle.
\end{multline}
Using monotonicity of $B_1,\dots,B_{n-2}$, the second last term can be estimated as
\begin{equation}\label{eq:key lip 2}
-\sum_{i=2}^{n-1}\langle x_i-\bar{x},B_{i-1}(x_{i-1})-B_{i-1}(\bar{x})\rangle
 \leq \sum_{i=2}^{n-1}\langle x_i-\bar{x},B_{i-1}(x_i)-B_{i-1}(x_{i-1})\rangle
\end{equation}
and, using $L$-Lipschitz continuity of $B_1,\dots,B_{n-2}$, the last term can be estimated as
\begin{equation}\label{eq:key lip 3}
\begin{aligned}
-\sum_{i=3}^{n}&\langle x_i-\bar{x}, B_{i-2}(x_{i-1}) - B_{i-2}(x_{i-2})\rangle \\
&=-\sum_{i=3}^{n}\langle x_{i-1}-\bar{x}, B_{i-2}(x_{i-1}) - B_{i-2}(x_{i-2})\rangle\\
&\quad+ \sum_{i=3}^{n}\langle x_{i-1}-x_i, B_{i-2}(x_{i-1}) - B_{i-2}(x_{i-2})\rangle\\
&\leq -\sum_{i=3}^{n}\langle x_{i-1}-\bar{x}, B_{i-2}(x_{i-1}) - B_{i-2}(x_{i-2})\rangle\\
&\quad+ \frac{L}{2}\sum_{i=3}^{n}\left(\|x_{i-1}-x_i\|^2+\|x_{i-1} - x_{i-2}\|^2\right)\\
&= -\sum_{i=2}^{n-1}\langle x_{i}-\bar{x}, B_{i-1}(x_{i}) - B_{i-1}(x_{i-1})\rangle
+L\sum_{i=2}^{n}\|x_i-x_{i-1}\|^2 \\
&\qquad- \frac{L}{2}\|x_2-x_1\|^2 - \frac{L}{2}\|x_n-x_{n-1}\|^2 \\
&= -\sum_{i=2}^{n-1}\langle x_{i}-\bar{x}, B_{i-1}(x_{i}) - B_{i-1}(x_{i-1})\rangle +\frac{L}{\gamma^2}\|(\Id-\widetilde{T})(\bz)\|^2\\
&\qquad
 - \frac{L}{2}\|(\Id-\widetilde{T})(\bz)_1\|^2 - \frac{L}{2}\|(\Id-\widetilde{T})(\bz)_{n-1}\|^2.
\end{aligned}
\end{equation}
Thus, substituting \eqref{eq:key lip 2} and \eqref{eq:key lip 3} into \eqref{eq:key lip} gives \eqref{eq:T_A ne lip}, which completes the proof.
\end{proof}

\begin{remark}
Compared to Lemma~\ref{l:T_A ne} from the previous section, the conclusions of Lemma~\ref{l:T_A ne_lip} are weaker in two ways. Firstly, the permissible stepsize range of $\lambda\in (0,\frac{1}{2L})$ is smaller than in Lemma~\ref{l:T_A ne}, which allowed $\lambda\in(0,\frac{2}{L})$. And, secondly, the operator $\widetilde{T}$ is only shown to be strongly quasi-nonexpansive in Lemma~\ref{l:T_A ne_lip} whereas $T$ is known to be averaged nonexpansive.
\end{remark}

The following theorem is our main result regarding convergence of \eqref{eq:alg2}.

\begin{theorem}\label{th:main 2}
Let $n\geq 3$, let $A_1,\dots,A_n\colon\Hilbert\setto\Hilbert$ be maximally monotone and let $B_1,\dots,B_{n-2}\colon\Hilbert\to\Hilbert$ be monotone and $L$-Lipschitz continuous with $\zer\left(\sum_{i=1}^nA_i+\sum_{i=1}^{n-2}B_i\right)\neq\varnothing$. Further, let $\lambda\in\bigl(0,\frac{1}{2L}\bigr)$ and $\gamma\in\bigl(0,1-2\lambda L\bigr)$. Given $\bz^0\in\Hilbert^{n-1}$, let $(\bz^k)\subseteq\Hilbert^{n-1}$ and $(\bx^k)\subseteq\Hilbert^n$ be the sequences given by \eqref{eq:alg2}.
Then the following assertions hold.
\begin{enumerate}[(a)]
\item\label{th:main2_a} The sequence $(\bz^k)$ converges weakly to a point $\bz\in\Fix \widetilde{T}$.
\item\label{th:main2_b} The sequence $(\bx^k)$ converges weakly to a point $(x,\dots,x)\in\Hilbert^n$ with $x\in\zer\left(\sum_{i=1}^nA_i+\sum_{i=1}^{n-2}B_i\right)$.
\end{enumerate}
\end{theorem}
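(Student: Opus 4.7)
The plan is to mirror the structure of the proof of Theorem~\ref{th:main}, using Lemma~\ref{l:T_A ne_lip} in place of Lemma~\ref{l:T_A ne}. As a prerequisite, I would first establish an analogue of Lemma~\ref{l:fixed points} for the operator $\widetilde{T}$: given $\bar{x}\in\zer\bigl(\sum_{i=1}^nA_i+\sum_{i=1}^{n-2}B_i\bigr)$, construct $\bar{\bz}\in\Hilbert^{n-1}$ as in Lemma~\ref{l:fixed points}\eqref{l:fixed points b} so that the vector $\bar{\bx}$ computed from $\bar{\bz}$ via \eqref{eq:xi lip} has all components equal to $\bar{x}$. The additional reflection differences $\lambda\bigl(B_{i-2}(\bar{x})-B_{i-2}(\bar{x})\bigr)$ simply vanish, so $\bar{\bz}\in\Fix\widetilde{T}$ and in particular $\Fix\widetilde{T}\neq\varnothing$ under the standing hypothesis.

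For part~\eqref{th:main2_a}, fix any $\bar{\bz}\in\Fix\widetilde{T}$ and instantiate \eqref{eq:T_A ne lip} at $\bz=\bz^k$. Telescoping gives Fej\'er monotonicity of $(\bz^k)$ with respect to $\Fix\widetilde{T}$ and the summability
\begin{equation*}
\sum_{k=0}^{\infty}\|\bz^{k+1}-\bz^k\|^2<\infty,
\end{equation*}
so $(\bz^k)$ is bounded and $\|\bz^{k+1}-\bz^k\|\to 0$. By \eqref{eq:th T_A lip}, this forces $\|x_{i+1}^k-x_i^k\|\to 0$ for $i\in\integ{1}{n-1}$, and $L$-Lipschitz continuity of the $B_j$'s then sends the reflection terms $B_{i-2}(x_{i-1}^k)-B_{i-2}(x_{i-2}^k)$ strongly to zero. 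Nonexpansivity of resolvents applied recursively to \eqref{eq:th J_A lip} yields boundedness of $(\bx^k)$ as well. Weak convergence of $(\bz^k)$ to a point in $\Fix\widetilde{T}$ follows from Opial's lemma, provided every weak cluster point of $(\bz^k)$ can be shown to lie in $\Fix\widetilde{T}$, which is the content of the next step.

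For part~\eqref{th:main2_b}, I would rewrite the resolvent identities from \eqref{eq:th J_A lip} in the compact form $S(\mathbf{u}^k)\ni\mathbf{v}^k$, where $S$ has the same shape as in \eqref{eq:operator s} but with $i$-th diagonal block $\bigl(\lambda(A_i+B_{i-1})\bigr)^{-1}$ (resp.\ $\lambda(A_n)$ in the last slot), together with a bounded skew-symmetric coupling operator. The sum of maximally monotone operators with one having full domain is maximally monotone, hence $S$ is demiclosed. Choosing a subsequence $(\bx^{k_j})\rightharpoonup\mathbf{w}$, the limits $x_{i+1}^k-x_i^k\to 0$ give $\mathbf{w}=(x,\dots,x)$ for some $x\in\Hilbert$; the vanishing reflection terms ensure that the right-hand side of the inclusion converges strongly, so demiclosedness passes the inclusion to the limit. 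Unravelling yields
\begin{equation*}
\lambda A_1(x)\ni z_1-x,\quad \lambda(A_i+B_{i-1})(x)\ni z_i-z_{i-1}\;\forall i\in\integ{2}{n-1},\quad \lambda A_n(x)\ni x-z_{n-1},
\end{equation*}
and summing gives $x\in\zer\bigl(\sum_{i=1}^nA_i+\sum_{i=1}^{n-2}B_i\bigr)$, while the inclusions themselves show that the corresponding weak cluster point $\bz$ of $(\bz^k)$ lies in $\Fix\widetilde{T}$. Since $x=J_{\lambda A_1}(z_1)$ is uniquely determined by $\bz$, uniqueness of the weak cluster point of $(\bx^k)$ follows, completing both parts.

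The main obstacle is the passage to the limit in the inclusions, because, unlike Theorem~\ref{th:main}\eqref{th:main_c}, we no longer have cocoercivity to guarantee strong convergence of $B_i(x_i^k)$ individually; only monotonicity and Lipschitz continuity are available. Fortunately, Lemma~\ref{l:T_A ne_lip} already gives $x_i^k-x_{i-1}^k\to 0$ strongly, and Lipschitz continuity transfers this to strong convergence to zero of all reflection-type increments appearing in \eqref{eq:th J_A lip}. This is precisely what is needed to close the demiclosedness step, so no stronger regularity of the $B_i$'s beyond what is assumed in Theorem~\ref{th:main 2} is required.
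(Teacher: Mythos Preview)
Your proposal is correct and follows essentially the same route as the paper's proof: nonemptiness of $\Fix\widetilde{T}$ via the fixed-point correspondence (the paper phrases this by noting $\Fix T=\Fix\widetilde{T}$ when $B_{n-1}=0$), Fej\'er monotonicity and $\|\bz^{k+1}-\bz^k\|\to 0$ from Lemma~\ref{l:T_A ne_lip}, and then the demiclosedness argument with the operator $S$ of \eqref{eq:operator s} (with $B_{n-1}=0$) to identify weak cluster points. One small imprecision: in the paper's arrangement the reflection increments $b_i^k$ sit in the \emph{argument} of $S$, not on the right-hand side, so their strong convergence to zero is used to get weak convergence of the input, while strong convergence of the output comes from $x_i^k-x_n^k\to 0$; your conclusion is unaffected.
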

\begin{proof}
\eqref{th:main2_a}:~Since $\zer\left(\sum_{i=1}^nA_i+\sum_{i=1}^{n-2}B_i\right)\neq\varnothing$, Lemma~\ref{l:fixed points}\eqref{l:fixed points b} implies that the set of fixed points of operator $T$ in~\eqref{eq:T_A}-\eqref{eq:xi} (with $B_{n-1}=0$) is nonempty. The latter set coincides with the set of fixed points of operator $\widetilde{T}$ in~\eqref{eq:T lip}-\eqref{eq:xi lip}, so $\Fix \widetilde{T}\neq\varnothing$. Since $\lambda\in\bigl(0,\frac{1}{2L})$ and $\gamma\in\bigl(0,1-2\lambda L\bigr)$, Lemma~\ref{l:T_A ne_lip} implies that $(\bz^k)$ is Fej\'er monotone with respect to $\Fix \widetilde{T}$ and that $\lim_{k\to+\infty}\|\bz^{k+1}-\bz^k\|=0$. By nonexpansivity of resolvents, $L$-Lipschitz continuity of $B_2,\dots,B_{n-1}$, and boundedness of $(\bz^k)$, it follows that $(\bx^k)$ is also bounded. Further, \eqref{eq:T lip} and the fact that $\lim_{k\to\infty}\|\bz^{k+1}-\bz^k\|=0$ implies that
\begin{equation}\label{eq:xki lips}
 \lim_{k\to\infty}\|x_{i}^k-x_{i-1}^k\|=0\quad\forall i=2,\dots, n.
\end{equation}
Let $\bu=(u_1,\dots,u_{n-1})\in\Hilbert^{n-1}$ be an arbitrary weak cluster point of $(\bz^k)$. Then, due to \eqref{eq:xki lips}, there exists a point $x\in\Hilbert$ such that $(\bu,\bw)$ is a weak cluster point of $(\bz^k,\bx^k)$, where $\bw=(x,\dots,x)\in\Hilbert^n$. Let $S$ denote the maximally monotone operator defined by \eqref{eq:operator s} when $B_{n-1}=0$.
Then \eqref{eq:th J_A lip} implies
\begin{multline}\label{eq:demiclosed 2}
S\begin{pmatrix}
z_1^k-x_1^k \\
(z_2^k-x_2^k)-(z_{1}^k-x_{1}^k) +\lambda b_2^k \\
(z_3^k-x_3^k)-(z_2^k-x_2^k) + \lambda b_3^k - \lambda b_2^k \\
\\
\vdots \\
(z_{n-1}^k-x_{n-1}^k)-(z_{n-2}^k-x_{n-2}^k)+\lambda b_{n-1}^k - \lambda b_{n-2}^k \\
x_n^k \\
\end{pmatrix}
 \ni
\begin{pmatrix}
x_1^k-x_n^k \\
x_2^k-x_n^k \\
x_3^k-x_n^k \\
\vdots \\
x_{n-1}^k-x_n^k\\
x_1^k-x_n^k\\
\end{pmatrix},
\end{multline}%
where $b_i^k:=B_{i-1}(x_{i}^k) - B_{i-1}(x_{i-1}^k)$.
Taking the limit along a subsequence of $(\bz^k,\bx^k)$ which converges weakly to $(\bu,\bw)$ in \eqref{eq:demiclosed 2}, using demiclosedness of $S$ together with $L$-Lipschitz continuity of $B_2,\dots,B_{n-1}$, and  unravelling the resulting expression gives that $\bu\in\Fix \widetilde{T}$ and $x=J_{\lambda A_1}(u_1)\in\zer\left(\sum_{i=1}^nA_i+\sum_{i=1}^{n-2}B_i\right)$. Thus, by \cite[Theorem~5.5]{bauschkecombettes}, it follows that $(\bz^k)$ converges weakly to a point $\bz\in \Fix \widetilde{T}$.

\eqref{th:main2_b}:~Follows by using an argument analogous to the one in Theorem~\ref{th:main}\eqref{th:main_b}.
\end{proof}

\begin{remark}[Exploiting cocoercivity]
If a Lipschitz continuous operator $B_i$ in \eqref{eq:mono inc lip} is actually cocoercive, then it is possible to reduce the number evaluations of $B_i$ per iteration by combining the ideas in Sections~\ref{s:dfb} and \ref{s:dforb}. In fact, we can consider the problem
\begin{equation*}
 \text{find~}x\in\Hilbert\text{~such that~}0\in\left(\sum_{i=1}^nA_i+\sum_{i=1}^{n-1}B_i\right)(x),
\end{equation*}
where $B_1,\dots,B_{n-2}$ are each either monotone and Lipschitz continuous or cocoercive, and $B_{n-1}$ is cocoercive. For this problem, we can replace \eqref{eq:xi lip} in the definition of $\widetilde{T}$ with
\begin{equation*}\left\{\begin{aligned}
    x_1&=J_{\lambda A_1}\bigl( z_1\bigr),  \\
    x_2&=J_{\lambda A_2}\bigl( z_2+x_{1}-z_{1}-\lambda B_{1}(x_{1}) \bigr) , \\
    x_i&=J_{\lambda A_i}\bigl( z_i+x_{i-1}-z_{i-1}-\lambda B_{i-1}(x_{i-1})-\lambda b_{i-1} \bigr)  \quad \forall i\in\integ{3}{n-1}, \\
    x_n&=J_{\lambda A_n}\bigl( x_1+x_{n-1}-z_{n-1}-\lambda B_{n-1}(x_{n-1})-\lambda b_{n-1} \bigr),
   \end{aligned}\right.
 \end{equation*}%
where $b_2,\dots,b_{n-1}\in\Hilbert$ are given by
 $$ b_i = \begin{cases}
   0 &\text{if $B_{i-1}$ is cocoercive}, \\
   B_{i-1}(x_{i}) - B_{i-1}(x_{i-1})  & \text{if $B_{i-1}$ is monotone and Lipschitz}.
 \end{cases}$$
This modification can be shown to converge using a proof similar to Theorem~\ref{th:main 2} for $\lambda\in(0,\frac{1}{2L})$. However, it is not straightforward to recover Theorem~\ref{th:main} as a special case of such a result because the stepsizes range of $\lambda\in(0,\frac{2}{L})$ in the cocoercive only case (i.e., Theorem~\ref{th:main}) are larger than the range in the mixed case. Moreover, Theorem~\ref{th:main}\eqref{th:main_c} (strong convergence to dual solutions) does not have an analogue in the statement of Theorem~\ref{th:main 2}. In addition, keeping the two cases separate allows the analysis to be as transparent as possible.
\end{remark}
\small

\paragraph{Acknowledgments}
FJAA and DTB were partially supported by the Ministry of Science, Innovation and Universities of Spain and the European Regional Development Fund (ERDF) of the European Commission, grant PGC2018-097960-B-C22. FJAA was partially supported by the Generalitat Valenciana (AICO/2021/165).
YM was supported by the Wallenberg Al, Autonomous Systems and Software Program (WASP) funded by the Knut
and Alice Wallenberg Foundation.  The project number is 305286.
MKT was supported in part by Australian Research Council grant DE200100063.
DTB was supported by MINECO and European Social Fund (PRE2019-090751) under the program ``Ayudas para contratos predoctorales para la formaci\'{o}n de doctores''
2019.

FJAA will always be indebted to his mentor, Professor Asen L. Dontchev, for valuable career (and life) advice as well as for sparking his interest in the analysis of optimisation algorithms under Lipschitzian properties.
The authors are thankful to the anonymous referees for their careful reading and for providing very helpful comments.

\end{document}